\newtheorem{theorem}{Theorem}[section]
\newtheorem{corollary}[theorem]{Corollary}
\newtheorem{lemma}[theorem]{Lemma}
\newtheorem{proposition}[theorem]{Proposition}
\theoremstyle{definition}
\newtheorem{definition}[theorem]{Definition}
\newtheorem{example}[theorem]{Example}
\theoremstyle{remark}
\newtheorem{Remark}[theorem]{Remark}
\numberwithin{equation}{section}
\begin{document}

\title[Constructing Parseval Fusion Frames via Operators]{Constructing Parseval Fusion Frames via Operators}

%%=============================================================%%
%% GivenName	-> \fnm{Joergen W.}
%% Particle	-> \spfx{van der} -> surname prefix
%% FamilyName	-> \sur{Ploeg}
%% Suffix	-> \sfx{IV}
%% \author*[1,2]{\fnm{Joergen W.} \spfx{van der} \sur{Ploeg} 
%%  \sfx{IV}}\email{iauthor@gmail.com}
%%=============================================================%%

\author[1]{\fnm{Ehsan} \sur{Ameli}}\email{eh.ameli@hsu.ac.ir; ameliehsan73@gmail.com}

\author*[1]{\fnm{Ali Akbar} \sur{Arefijamaal}}\email{arefijamaal@hsu.ac.ir}
%\equalcont{These authors contributed equally to this work.}

\author[2,1]{\fnm{Fahimeh} \sur{Arabyani Neyshaburi}}\email{fahimeh.arabyani@gmail.com}
%\equalcont{These authors contributed equally to this work.}

\affil[1]{\orgdiv{Department of Mathematics and Computer Sciences}, \orgname{Hakim Sabzevari University}, \state{Sabzevar}, \country{Iran}}

\affil[2]{\orgdiv{Department of Mathematical Sciences}, \orgname{Ferdowsi University of Mashhad}, \city{Mashhad}, \country{Iran}}

%%==================================%%
%% Sample for unstructured abstract %%
%%==================================%%

\abstract{This article explores the problem of modifying the subspaces of a fusion frame in order to construct a Parseval fusion frame. In this respect, the notion of scalability is extended to the fusion frame setting. Then, scalable fusion Riesz bases are characterized, and a concrete form for scalable 1-excess fusion frames is obtained. Furthermore, it is shown that 1-excess dual fusion frames of a fusion Riesz basis are not scalable. Finally, several examples are exhibited to confirm the acquired results.}

\keywords{Fusion frames, scalable fusion frames, Parseval fusion frames, excess}

%%\pacs[JEL Classification]{D8, H51}

\pacs[MSC Classification]{Primary 42C15; Secondary 15A12}

\maketitle

\section{Introduction and Preliminaries}

A Parseval fusion frame is a collection of orthogonal projection operators such that their sum equals the identity operator \cite{frame of subspace}. These types of fusion frames exhibit robustness against additive noise and erasures \cite{scaling weights, Weaving Hilbert space, Norm retrieval algorithms,Cahill-Fickus,CasazzaFickus, Rahimi}, making them well-suited for emerging real-world applications in communications and distributed sensing \cite{excess of fusion, Arabyani dual, A.A.SH}. Therefore, the development of Parseval fusion frames provides a substantial framework for numerous applications, including sensor networks, coding theory, filter bank theory, signal and image processing, wireless communications, and many other fields \cite{Cahill-Fickus, CasazzaFickus, Casazza17, convex geometry}. A fundamental question is how to construct a Parseval fusion frame from a given fusion frame. In addressing this challenge, the authors in \cite{scaling weights} proposed a novel approach that involves scaling weights. In this research, motivated by the results obtained in \cite{scaling weights,Casazza17,dual scalable frames,Gitta 13,convex geometry,Rahimi scalable}, we aim to construct Parseval fusion frames by modifying subspaces as well as adjusting weights. Building upon this perspective, we are seeking operators that allow the generation of Parseval fusion frames by implementing specific changes in the subspaces.

To facilitate comprehension for the reader, we present a preliminary review of the definitions and basic properties of fusion frames.
Throughout this paper, let $\mathcal {H}$ be a separable Hilbert space and $\mathcal {H}_n$ an $n$-dimensional Hilbert space. Moreover, let $I$ and $J$ be countable index sets and $I_{\mathcal {H}}$ be the identity operator on $\mathcal {H}$. We denote the set of all bounded operators on $\mathcal {H}$ by $B(\mathcal {H})$ and the range and null space of $T \in B(\mathcal {H})$ by $R(T)$ and $N(T)$, respectively. Furthermore, the transpose of a matrix $M$ is denoted by $M^t$. Suppose that $\{W_{i}\}_{i \in I}$ is a collection of closed subspaces of $\mathcal {H}$ and $ \{\omega_{i}\}_{i \in I} $ is a family of weights, i.e. $\omega_{i}>0 $ for all $i \in I$. Then $ \mathcal{W}=\{(W_{i},\omega_{i})\}_{i \in I} $ is called a \textit{fusion frame} \cite{frame of subspace} for $\mathcal {H}$ if there exist constants $0<A\leq B<\infty$ such that for all $f \in \mathcal{H},$
\begin{equation}\label{fusion frame def}
A \Vert f\Vert^{2}\leq \sum _{i \in I} \omega_{i}^2\Vert \pi_{W_{i}}f \Vert ^{2} \leq B\Vert f \Vert ^{2},
\end{equation} 
where $\pi_{W_{i}}$ is the orthogonal projection onto the subspace $W_{i}.$ The constants $A$ and $B$ are called the \textit{fusion frame bounds}. If we only have the upper bound in \eqref{fusion frame def}, then $ \mathcal{W} $ is said to be a \textit{fusion Bessel sequence}. A fusion frame is called \textit{A-tight} if $ A=B $, and \textit{Parseval} if $ A=B=1 $. If $ \omega_{i}=\omega $ for all $ i\in I $, then $ \mathcal{W}$ is called \textit{$ \omega $-uniform}. We abbreviate 1-uniform fusion frames as $\{W_{i}\}_{i \in I}$. A family of closed subspaces $\{W_{i}\}_{i \in I} $ is said to be a \textit{fusion orthonormal basis} when $\mathcal {H}$ is the orthogonal sum of the subspaces $ W_{i}$ and it is said a \textit{Riesz decomposition} of $\mathcal {H}$, if for every $ f \in \mathcal{H} $ there is a unique choice of $ f_{i}\in W_{i} $ such that $ f=\sum _{i \in I} f_{i} $. It is evident that every fusion orthonormal basis is a Riesz decomposition for $\mathcal {H}$, and also every Riesz decomposition is a 1-uniform fusion frame for $\mathcal {H}$ \cite{frame of subspace}. In addition, a family $\{W_{i}\}_{i \in I} $ of closed subspaces of $\mathcal {H}$ is a fusion orthonormal basis if and only if it is a 1-uniform Parseval fusion frame \cite{frame of subspace}. A family of closed subspaces $\{W_{i}\}_{i \in I} $ is called a \textit{fusion Riesz basis} whenever it is complete for $\mathcal {H}$ and there exist positive constants $C$ and $D$ such that for every finite subset $ J\subset I $ and arbitrary vector $ f_{j}\in W_{j}~ (j \in J)$, we have
\begin{equation*}\label{fusion Riesz basis}
C \sum _{j \in J}\Vert f_{j} \Vert ^{2} \leq \bigg\Vert \sum _{j \in J} \omega_{j} f_{j}\bigg\Vert ^2 \leq D\sum _{j \in J}\Vert f_{j} \Vert ^{2}.
\end{equation*}
For a Bessel sequence $\mathcal{W}=\{(W_{i},\omega_{i})\}_{i \in I}$, the \textit{synthesis operator} $ T_{\mathcal{W}}: \sum_{i \in I}\bigoplus W_{i} \rightarrow \mathcal {H}$ is defined by 
\begin{equation*}
T_{\mathcal{W}}(\{f_{i}\}_{i \in I})=\sum_{i \in I}\omega_{i}f_{i}, \quad \{f_{i}\}_{i \in I} \in \sum_{i \in I}\bigoplus W_{i},
\end{equation*}
where 
\begin{equation*}
\sum_{i \in I}\bigoplus W_{i}=\left\lbrace \{f_{i}\}_{i \in I}: f_{i}\in W_{i}~,~ \sum_{i \in I}\Vert f_{i} \Vert ^{2}< \infty \right\rbrace.
\end{equation*}
Furthermore, the \textit{fusion frame operator} $S_{\mathcal{W}}:\mathcal {H} \rightarrow \mathcal {H}$ defined by
\begin{equation*}
S_{\mathcal{W}}f=T_{\mathcal{W}}T^{*}_{\mathcal{W}}f=\sum_{i \in I}\omega_{i}^{2}\pi_{W_{i}}f,
\end{equation*}
is positive and self-adjoint. It is shown that for a fusion frame $\mathcal{W}$, the fusion frame operator $S_{\mathcal{W}}$ is invertible, and thus we have the following reconstruction formula \cite{frame of subspace}: 
\begin{equation*}
f=\sum_{i \in I}\omega_{i}^{2}S_{\mathcal{W}}^{-1}\pi_{W_{i}}f, \quad(f \in \mathcal {H}).
\end{equation*}
Obviously, $\mathcal{W}=\{(W_{i},\omega_{i})\}_{i \in I} $ is a Parseval fusion frame if and only if $S_{\mathcal{W}}=I_{\mathcal {H}}$. The family $\widetilde{\mathcal{W}}:=\left\lbrace (S_{\mathcal{W}}^{-1}W_{i},\omega_{i})\right\rbrace _{i \in I}$, which is also a fusion frame, is called the \textit{canonical dual} of $\mathcal{W}$. Generally, a fusion Bessel sequence $\mathcal{V}=\{(V_{i},\upsilon_{i})\}_{i \in I}$ is a dual of $\mathcal{W}$ if and only if \cite{Osgooei}
$T_{\mathcal{V}}\varphi_{\mathcal{VW}} T_{\mathcal{W}}^{*}=I_{\mathcal {H}},$ 
where the bounded operator $\varphi_{\mathcal{VW}}:\sum_{i \in I}\bigoplus W_{i}\rightarrow \sum_{i \in I}\bigoplus V_{i}$ is given by
\begin{equation*}
\varphi_{\mathcal{VW}}\big(\{f_{i}\}_{i \in I}\big)=\left\lbrace \pi_{V_{i}}S_{\mathcal{W}}^{-1}f_{i}\right\rbrace _{i \in I} .
\end{equation*}
Given a fusion frame $\mathcal{V}=\{V_{i}\}_{i \in I} $ with the synthesis operator $T_{\mathcal{V}}$, the \textit{excess} of $\mathcal{V}$ is defined as \cite{Excess 1}
\begin{equation*}
e(\mathcal{V})=\textnormal{dim}N(T_{\mathcal{V}}).
\end{equation*}

For a comprehensive survey on fusion frame theory and its applications, the reader is referred to \cite{Weaving Hilbert space, Norm retrieval algorithms,frame of subspace, 16, Excess 1,Nga,mitra sh.}. The following result establishes the relationship between local and global properties.

\begin{theorem}\label{Local theorem}
\cite{frame of subspace}  Let $\{W_{i}\}_{i \in I}$ be a family of closed subspaces of $\mathcal {H},$ and for every $i \in I,$ $ \omega _{i} >0 $ and $\{f_{i,j}\}_{j \in J_{i}}$ be a frame (Riesz basis) for $W_{i}$ with frame bounds $A_{i}$ and $B_{i}$ such that
\begin{equation*}
0 < A=\textnormal{inf}_{i \in I} A_{i}\leq \textnormal{sup}_{i \in I} B_{i}=B < \infty .
\end{equation*}
Then the following conditions are equivalent:
\begin{itemize}
\item[$(i)$] $\{(W_{i},\omega_{i})\}_{i \in I}$ is a fusion frame (fusion Riesz basis) for $\mathcal {H}$,
\item[$(ii)$] $\{\omega_{i}f_{i,j}\}_{i \in I, j \in J} $ is a frame (Riesz basis) for $\mathcal {H}$.
\end{itemize}
\end{theorem}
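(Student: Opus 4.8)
The plan is to move between the two statements by way of the elementary pointwise identity relating inner products against the local frame vectors to the norms of the projections. Fix $f \in \mathcal{H}$ and $i \in I$. Since each $f_{i,j} \in W_i$, we have $\langle f, f_{i,j}\rangle = \langle \pi_{W_i} f, f_{i,j}\rangle$, so the frame inequality for $\{f_{i,j}\}_{j \in J_i}$ in $W_i$ gives
\[
A_i \|\pi_{W_i} f\|^2 \;\le\; \sum_{j \in J_i} |\langle f, f_{i,j}\rangle|^2 \;\le\; B_i \|\pi_{W_i} f\|^2 .
\]
Multiplying by $\omega_i^2$, summing over $i \in I$, and invoking the uniform bounds $0 < A \le A_i \le B_i \le B < \infty$ together with $|\langle f,\omega_i f_{i,j}\rangle|^2 = \omega_i^2|\langle f, f_{i,j}\rangle|^2$, one obtains the single two-sided estimate
\[
A \sum_{i \in I}\omega_i^2\|\pi_{W_i}f\|^2 \;\le\; \sum_{i \in I,\, j \in J_i} |\langle f, \omega_i f_{i,j}\rangle|^2 \;\le\; B \sum_{i \in I}\omega_i^2\|\pi_{W_i}f\|^2 .
\]
This makes the frame equivalence immediate: if $\mathcal{W}$ is a fusion frame with bounds $C,D$, then $\{\omega_i f_{i,j}\}$ is a frame with bounds $AC$ and $BD$; conversely, if $\{\omega_i f_{i,j}\}$ is a frame with bounds $C',D'$, then $\mathcal{W}$ is a fusion frame with bounds $C'/B$ and $D'/A$. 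The upper inequality also guarantees convergence of all the series involved, so no separate Bessel argument is needed.

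For the Riesz-basis version I would first observe that completeness transfers for free, since $\overline{\mathrm{span}}\{\omega_i f_{i,j} : i \in I,\, j \in J_i\} = \overline{\mathrm{span}}\bigcup_{i \in I} W_i$ (each $\{f_{i,j}\}_{j \in J_i}$ being complete in $W_i$). It then remains to match the Riesz bounds. Given a finite index set and scalars $c_{i,j}$, put $f_i := \sum_j c_{i,j} f_{i,j} \in W_i$; the local Riesz inequalities yield $A_i \sum_j |c_{i,j}|^2 \le \|f_i\|^2 \le B_i \sum_j |c_{i,j}|^2$, hence, using the uniform bounds, $\|f_i\|^2/B \le \sum_j |c_{i,j}|^2 \le \|f_i\|^2/A$ with constants independent of $i$. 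Substituting $\{f_i\}$ into the fusion Riesz-basis inequality $C\sum_i\|f_i\|^2 \le \|\sum_i \omega_i f_i\|^2 \le D\sum_i\|f_i\|^2$ and using $\sum_i \omega_i f_i = \sum_{i,j} c_{i,j}\omega_i f_{i,j}$ produces $AC\sum_{i,j}|c_{i,j}|^2 \le \|\sum_{i,j} c_{i,j}\omega_i f_{i,j}\|^2 \le BD\sum_{i,j}|c_{i,j}|^2$, i.e. $\{\omega_i f_{i,j}\}$ is a Riesz basis. The converse runs the same chain backwards: from Riesz bounds $C',D'$ for $\{\omega_i f_{i,j}\}$ one recovers the fusion Riesz bounds $C'/B$ and $D'/A$ for $\mathcal{W}$.

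The points to handle with care, none of which carry real content, are: (a) reading the index set in $(ii)$ as the disjoint union $\bigsqcup_{i} J_i$, so that the doubly-indexed family is genuinely a sequence; (b) the interchange of summations and the convergence of $\sum_{i,j}$ in the frame direction, which follows from the Bessel bound above together with unconditional summability of nonnegative terms; and (c) in the Riesz direction, passing from finite sums to the whole family, where the $\ell^2$-norm equivalence does the work (and where the uniqueness of the local coefficient sequences is what underlies the Riesz-decomposition property). I expect the only genuinely essential observation — rather than bookkeeping — to be that the \emph{uniformity} of the local bounds is precisely what converts the pointwise and finite estimates into global frame and Riesz inequalities; without it, both implications fail.
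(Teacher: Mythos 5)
Your argument is correct and is essentially the standard proof of this result from the cited reference (Casazza--Kutyniok): the two-sided estimate $A\sum_i\omega_i^2\|\pi_{W_i}f\|^2\le\sum_{i,j}|\langle f,\omega_i f_{i,j}\rangle|^2\le B\sum_i\omega_i^2\|\pi_{W_i}f\|^2$ for the frame part, and the substitution $f_i=\sum_j c_{i,j}f_{i,j}$ with the uniform local bounds for the Riesz part. The paper itself states this theorem without proof, citing \cite{frame of subspace}, and your proof matches the one given there.
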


Finally, we list two known results that will be employed in the subsequent sections.
\begin{proposition}\cite{16} \label{Gavruta1} 
Let $W$ be a closed subspace of $\mathcal{H}$ and $U \in B(\mathcal{H})$ an invertible operator. Then the following are equivalent:
\begin{itemize}
\item[$(i)$] $\pi_{UW}U=U\pi_{W}.$
\item[$(ii)$]$U^{*}UW\subseteq W .$
\end{itemize}
Moreover, $\pi_{UW}=\pi_{UW}U^{*-1}\pi_{W}U^{*}$.
\end{proposition}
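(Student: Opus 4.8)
The plan is to translate both conditions into operator identities among the self‑adjoint idempotents $\pi_{W}$, $\pi_{UW}$ and the operators $U$, $U^{*}$, and to exploit the adjoint relations $\pi_{W}^{*}=\pi_{W}$, $\pi_{UW}^{*}=\pi_{UW}$ throughout. (As a preliminary remark, $UW$ is closed because $U$ is a linear homeomorphism of $\mathcal{H}$, so $\pi_{UW}$ is well defined.) The cornerstone fact, which I would record first, is the description of the orthogonal complement of $UW$:
\[
(UW)^{\perp}=U^{*-1}\big(W^{\perp}\big),\qquad\text{i.e.}\qquad g\perp UW\iff U^{*}g\perp W,
\]
which follows at once from $\langle g,Uw\rangle=\langle U^{*}g,w\rangle$. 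In particular $N(\pi_{UW})=(UW)^{\perp}=R\big(U^{*-1}\pi_{W^{\perp}}U^{*}\big)$, and this single observation will also deliver the final displayed identity.

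For $(i)\Rightarrow(ii)$: taking adjoints in $\pi_{UW}U=U\pi_{W}$ gives $U^{*}\pi_{UW}=\pi_{W}U^{*}$; evaluating $U^{*}\pi_{UW}U$ in two ways—as $\pi_{W}U^{*}U$ via the adjoint relation and as $U^{*}U\pi_{W}$ via the original—yields $\pi_{W}U^{*}U=U^{*}U\pi_{W}$, so $U^{*}U$ commutes with $\pi_{W}$, and then $U^{*}Uw=\pi_{W}(U^{*}Uw)\in W$ for every $w\in W$, which is $(ii)$.

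For $(ii)\Rightarrow(i)$: since $U^{*}U$ is self‑adjoint, the inclusion $U^{*}UW\subseteq W$ forces $U^{*}UW^{\perp}\subseteq W^{\perp}$ (pair $U^{*}Uv$ against an arbitrary $w\in W$). Now split $f=\pi_{W}f+(I-\pi_{W})f$: the vector $U\pi_{W}f$ lies in $UW$, whereas $U(I-\pi_{W})f$ lies in $(UW)^{\perp}$, because $\langle U(I-\pi_{W})f,Uw\rangle=\langle U^{*}U(I-\pi_{W})f,w\rangle=0$ as $U^{*}U(I-\pi_{W})f\in W^{\perp}$; hence $\pi_{UW}Uf=U\pi_{W}f$, which is $(i)$. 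Finally, for the ``moreover'' identity—valid for every closed $W$ and every invertible $U$, with no appeal to $(i)$ or $(ii)$—write $\pi_{W}=I-\pi_{W^{\perp}}$ to get $\pi_{UW}U^{*-1}\pi_{W}U^{*}=\pi_{UW}-\pi_{UW}\big(U^{*-1}\pi_{W^{\perp}}U^{*}\big)$, and observe that the subtracted term vanishes since $R\big(U^{*-1}\pi_{W^{\perp}}U^{*}\big)=(UW)^{\perp}=N(\pi_{UW})$.

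I expect the only genuinely delicate point to be conceptual rather than computational: spotting that the operator which must commute with $\pi_{W}$ is $U^{*}U$, not $U$ itself, together with keeping the adjoint bookkeeping straight in $(i)\Rightarrow(ii)$. Once the cornerstone complement formula and this commutation are in hand, the remaining steps are short manipulations.
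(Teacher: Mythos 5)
Your proof is correct and complete. The paper states this proposition as a citation of G\u{a}vru\c{t}a's result without reproducing a proof, so there is no in-paper argument to compare against; your argument --- the complement formula $(UW)^{\perp}=(U^{*})^{-1}(W^{\perp})$, the commutation of $U^{*}U$ with $\pi_{W}$ obtained by computing $U^{*}\pi_{UW}U$ two ways for $(i)\Rightarrow(ii)$, the orthogonal splitting $Uf=U\pi_{W}f+U(I-\pi_{W})f$ for the converse, and the observation that the ``moreover'' identity holds unconditionally for any invertible $U$ --- is the standard one, and every step checks out.
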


\begin{proposition}\cite{frame of subspace, mitra sh.} \label{mitra sh.} 
Let $\mathcal{W}=\{(W_{i},\omega_{i})\}_{i \in I} $ be a fusion frame for $\mathcal {H}$. Then the following are equivalent:
\begin{itemize}
\item[$(i)$] $\mathcal{W}$ is a fusion Riesz basis.
\item[$(ii)$] $S_{\mathcal{W}}^{-1}W_{i}\perp W_{j}$ for all $i , j \in I,~ i \neq j $.
\item[$(iii)$] $ \omega_{i}^{2}\pi_{W_{i}}S_{\mathcal{W}}^{-1}\pi_{W_{j}}=\delta_{i,j}\pi_{W_{j}}$ for all $i , j \in I $. 
\end{itemize}
\end{proposition}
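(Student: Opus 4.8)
The plan is to establish the cycle $(i)\Rightarrow(iii)\Rightarrow(ii)\Rightarrow(i)$, using two tools repeatedly: the dictionary of Theorem~\ref{Local theorem} between $\mathcal W$ and the vector frame obtained by choosing an orthonormal basis in each $W_i$, and the reconstruction formula $f=\sum_{i\in I}\omega_i^2 S_{\mathcal W}^{-1}\pi_{W_i}f$ together with the invertibility of $S_{\mathcal W}$ (both available since $\mathcal W$ is a fusion frame).

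To prove $(i)\Rightarrow(iii)$ I would, for every $i\in I$, fix an orthonormal basis $\{e_{i,j}\}_{j\in J_i}$ of $W_i$. An orthonormal basis is a Riesz basis with bounds $1,1$, so Theorem~\ref{Local theorem} gives that $\{\omega_i e_{i,j}\}_{i\in I,\,j\in J_i}$ is a Riesz basis for $\mathcal H$, and its frame operator is exactly $\sum_{i,j}\omega_i^2\langle\cdot,e_{i,j}\rangle e_{i,j}=\sum_{i}\omega_i^2\pi_{W_i}=S_{\mathcal W}$. The canonical dual of a Riesz basis is its unique biorthogonal sequence, hence $\langle \omega_i S_{\mathcal W}^{-1}e_{i,j},\,\omega_k e_{k,l}\rangle=\delta_{(i,j),(k,l)}$. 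Then expanding $\pi_{W_j}f=\sum_l\langle f,e_{j,l}\rangle e_{j,l}$, applying $S_{\mathcal W}^{-1}$ termwise, and projecting onto $W_i$ via $\pi_{W_i}g=\sum_m\langle g,e_{i,m}\rangle e_{i,m}$, the resulting double sum collapses by biorthogonality to $\delta_{i,j}\,\omega_j^{-2}\pi_{W_j}f$; multiplying by $\omega_i^2$ and noting that the Kronecker symbol forces $\omega_i=\omega_j$ yields exactly $(iii)$.

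For $(iii)\Rightarrow(ii)$ I would take $i\neq j$, so that $(iii)$ says $\pi_{W_i}S_{\mathcal W}^{-1}\pi_{W_j}=0$, i.e. $\pi_{W_i}\big(S_{\mathcal W}^{-1}\pi_{W_j}f\big)=0$ for all $f\in\mathcal H$; letting $f$ range over $\mathcal H$ makes $\pi_{W_j}f$ range over all of $W_j$, so every vector of $S_{\mathcal W}^{-1}W_j$ is orthogonal to $W_i$, which is $(ii)$. For $(ii)\Rightarrow(i)$ I would first recover the diagonal identity $\omega_i^2\pi_{W_i}S_{\mathcal W}^{-1}\pi_{W_i}=\pi_{W_i}$ by applying $\pi_{W_i}$ to the reconstruction formula and discarding the terms with index $k\neq i$, which vanish because $S_{\mathcal W}^{-1}\pi_{W_k}f\in S_{\mathcal W}^{-1}W_k\perp W_i$ by $(ii)$. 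Then, given $\{f_i\}_{i\in I}\in\sum_{i\in I}\bigoplus W_i$ with $\sum_{i\in I}\omega_i f_i=0$, applying the bounded operator $\omega_j^2\pi_{W_j}S_{\mathcal W}^{-1}$ and again killing the off-diagonal terms through $(ii)$ leaves $0=\omega_j\,\pi_{W_j}f_j=\omega_j f_j$, so $f_j=0$ for every $j$; hence $T_{\mathcal W}$ is injective. Since $S_{\mathcal W}=T_{\mathcal W}T_{\mathcal W}^{*}$ is invertible, $T_{\mathcal W}$ is also surjective, so it is a bounded bijection of Hilbert spaces and therefore bounded below; bounded below gives the lower fusion Riesz inequality (on finite, indeed all $\ell^2$, sequences), boundedness of $T_{\mathcal W}$ gives the upper one, and completeness of $\mathcal W$ (automatic for a fusion frame) finishes the verification that $\mathcal W$ is a fusion Riesz basis.

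The step I expect to be most delicate is $(i)\Rightarrow(iii)$: one must justify interchanging the bounded operators $S_{\mathcal W}^{-1}$ and the projections with the (possibly infinite) orthonormal expansions, and keep the bookkeeping with the two index families $\{e_{i,j}\}$ and $\{e_{k,l}\}$ consistent with biorthogonality. A cleaner route that avoids some of this is to derive $(iii)$ from the uniqueness of the dual of a Riesz basis applied directly to the fusion reconstruction formula. The other genuinely non-cosmetic ingredient is the use of the bounded inverse theorem in $(ii)\Rightarrow(i)$, which is exactly where the standing hypothesis that $\mathcal W$ is already a fusion frame—hence $T_{\mathcal W}$ is onto—is essential.
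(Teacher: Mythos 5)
Your proof is correct. Note that the paper itself offers no proof of this proposition: it is quoted from \cite{frame of subspace, mitra sh.} (and later, in Lemma~\ref{r.e}, the equivalence $(i)\Leftrightarrow(iii)$ is again attributed to \cite{mitra sh.}), so there is no in-paper argument to compare against; what follows is an assessment of your argument on its own terms. The cycle $(i)\Rightarrow(iii)\Rightarrow(ii)\Rightarrow(i)$ goes through. For $(i)\Rightarrow(iii)$, choosing orthonormal bases gives uniform local bounds $A_i=B_i=1$, so Theorem~\ref{Local theorem} applies, the identification $S_{\mathcal F}=S_{\mathcal W}$ is exactly as you say, and the biorthogonality of the canonical dual of a Riesz basis collapses the double sum to $\delta_{i,j}\omega_j^{-2}\pi_{W_j}$; the interchanges you flag are all justified by boundedness of $S_{\mathcal W}^{-1}$ and $\pi_{W_i}$ and continuity of the inner product. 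The step $(iii)\Rightarrow(ii)$ is immediate since $\omega_i>0$. In $(ii)\Rightarrow(i)$ your two uses of the orthogonality hypothesis — first against the reconstruction formula to get the diagonal identity $\omega_i^2\pi_{W_i}S_{\mathcal W}^{-1}\pi_{W_i}=\pi_{W_i}$ on $W_i$, then against a null vector of $T_{\mathcal W}$ to force $f_j=0$ — are correct, and the passage from injectivity plus surjectivity (the latter from invertibility of $S_{\mathcal W}=T_{\mathcal W}T_{\mathcal W}^{*}$) to a lower bound for $T_{\mathcal W}$ via the open mapping theorem delivers precisely the finite-subset Riesz inequalities in the paper's definition; completeness is indeed automatic from the lower fusion frame bound. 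This is essentially the standard argument from the cited sources (local Riesz bases for one direction, synthesis-operator injectivity for the other), so nothing further is needed.
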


The paper is organized as follows. In Section 2, we establish the concept of operator-scalability and investigate a method for constructing Parseval fusion frames. Subsequently, we present an operator condition that corresponds to the scalability of fusion frames. In particular, we provide a complete characterization of operator-scalable fusion Riesz bases. In Section 3, we study the operator-scalability of 1-excess fusion frames and demonstrate that 1-excess dual fusion frames of a fusion Riesz basis are not operator-scalable. Finally, we give some examples in order to confirm the results. 

\section{Operator-Scalability and Fusion Frame Properties}
In this section, we propose a method for constructing Parseval fusion frames that is based on altering the subspaces and scaling the weights. First of all, we provide some generalizations of Propositions \ref{Gavruta1} and \ref{mitra sh.} that will be used in the subsequent results.

\begin{lemma}\label{extend Gav}
Let $ W\subset \mathcal{H}$ be a closed subspace and $U,T \in B(\mathcal{H})$ be invertible operators such that $\pi_{UW}UT=U\pi_{W}.$ Then $U^{*}UW\subseteq \left( T^{*}\right) ^{-1}W.$
\end{lemma}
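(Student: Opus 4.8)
The plan is to reduce the desired inclusion to an orthogonality statement and then exploit the hypothesis on the orthogonal complement of $W$. Since $T$ is invertible, $T^{*}$ is invertible as well, and the inclusion $U^{*}UW\subseteq (T^{*})^{-1}W$ is equivalent to $T^{*}U^{*}UW\subseteq W$, that is, to $(UT)^{*}Uw\in W$ for every $w\in W$. Because $W$ is closed we have $W=(W^{\perp})^{\perp}$, so it suffices to show that $\langle (UT)^{*}Uw,\,v\rangle=0$ for all $w\in W$ and all $v\in W^{\perp}$, equivalently $\langle Uw,\,UTv\rangle=0$.

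The key step I would isolate first is the inclusion $UT(W^{\perp})\subseteq (UW)^{\perp}$. Indeed, if $v\in W^{\perp}$ then $\pi_{W}v=0$, and applying the identity $\pi_{UW}UT=U\pi_{W}$ to $v$ yields $\pi_{UW}(UTv)=U\pi_{W}v=0$, which is exactly the statement that $UTv$ is orthogonal to $UW$.

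Granting this, the lemma follows at once: for $w\in W$ we have $Uw\in UW$, while for $v\in W^{\perp}$ we have $UTv\in (UW)^{\perp}$ by the previous step, so $\langle Uw,\,UTv\rangle=0$. Hence $(UT)^{*}Uw\perp W^{\perp}$, which gives $T^{*}U^{*}Uw=(UT)^{*}Uw\in(W^{\perp})^{\perp}=W$; applying $(T^{*})^{-1}$ produces $U^{*}Uw\in(T^{*})^{-1}W$, as claimed. Note that this specializes to the implication $(i)\Rightarrow(ii)$ of Proposition \ref{Gavruta1} when $T=I_{\mathcal{H}}$.

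There is essentially no serious obstacle here; the only point requiring care is the bookkeeping with adjoints and the realization that it is cleanest to test the claim against vectors of $W^{\perp}$ rather than to manipulate ranges directly. In particular I would avoid trying to transplant the proof of Proposition \ref{Gavruta1} verbatim (which relies on the formula $\pi_{UW}=\pi_{UW}U^{*-1}\pi_{W}U^{*}$), since the extra factor $T$ would only clutter that approach.
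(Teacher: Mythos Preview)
Your proof is correct and follows essentially the same line as the paper: both pick $v\in W^{\perp}$, use the hypothesis to see that $\pi_{UW}UTv=U\pi_{W}v=0$ (so $UTv\in (UW)^{\perp}$), and then pair with $Uw$ for $w\in W$ to conclude $\langle T^{*}U^{*}Uw,v\rangle=\langle Uw,UTv\rangle=0$, whence $T^{*}U^{*}UW\subseteq W$. The only difference is packaging---the paper carries out the inner-product computation in a single chain rather than isolating the inclusion $UT(W^{\perp})\subseteq (UW)^{\perp}$ as a separate step.
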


\begin{proof}
Suppose that $f \in W^\perp,$ then $\pi_{UW}UTf=U\pi_{W}f=0.$ Hence, for every $g \in W,$ we get
\begin{align*}
\left\langle f,T^{*}U^{*}Ug \right\rangle &= \left\langle UTf,Ug \right\rangle 
\\&=\left\langle \pi_{UW}UTf,Ug \right\rangle =0,
\end{align*}
which yields that $f \in \left( T^{*}U^{*}UW\right) ^\perp.$ Thus, it gives $U^{*}UW\subseteq \left( T^{*}\right) ^{-1}W.$
\end{proof}

It is worthy of note that the converse of the above lemma is not generally true. It is sufficient to take $ W=\mathcal{H}_{2}\times \{0\} \subset \mathcal{H}_{3},~U=I_{\mathcal{H}_{3}}$ and
\begin{equation*}
T=\begin{pmatrix}
1 & 1 & 0 \\
0 & 1 & 0 \\
0 & 0 & 1 \\
\end{pmatrix}.
\end{equation*}

\begin{lemma}\label{r.e}
Let $\mathcal{W}=\{(W_{i},\omega_{i})\}_{i \in I} $ be a fusion frame for $\mathcal {H}$ and $U \in B(\mathcal{H})$ an invertible operator. Then the following are equivalent:
\begin{itemize}
\item[$(i)$] $\mathcal{W}$ is a fusion Riesz basis.
\item[$(ii)$] $ \left( U^*\right) ^{-1}S_{\mathcal{W}}^{-1}W_{i}\perp UW_{j}$ for all $i,j \in I, i\neq j. $
\item[$(iii)$] $ \omega_{i}^{2}\pi_{W_{i}}S_{W}^{-1}\pi_{W_{j}}=\delta_{i,j}\pi_{W_{j}}~ for~all~ i , j \in I $.
\item[$(iv)$] $\omega_{i}^{2}\pi_{UW_{i}}\left( U^*\right) ^{-1}S_{\mathcal{W}}^{-1}\pi_{W_{j}}=\delta_{i,j}\pi_{UW_{j}}\left( U^*\right) ^{-1}$ for all $i,j \in I.$
\end{itemize}
\end{lemma}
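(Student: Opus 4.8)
The strategy is to show the cycle of implications $(i)\Leftrightarrow(iii)$, then $(i)\Leftrightarrow(ii)$, then $(iii)\Leftrightarrow(iv)$, using Proposition \ref{mitra sh.} as the backbone and transporting each of its characterizations through the invertible operator $U$. Since Proposition \ref{mitra sh.} already gives $(i)\Leftrightarrow(iii)$ verbatim (condition $(iii)$ here is literally condition $(iii)$ there, modulo the harmless typo $S_W$ vs.\ $S_{\mathcal W}$), that equivalence requires no work. The content of the lemma is therefore to verify that applying $(U^*)^{-1}$ on appropriate sides turns the orthogonality and projection-identity statements of Proposition \ref{mitra sh.} into $(ii)$ and $(iv)$.

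\emph{Step 1: $(i)\Leftrightarrow(ii)$.} By Proposition \ref{mitra sh.}, $(i)$ is equivalent to $S_{\mathcal W}^{-1}W_i\perp W_j$ for all $i\neq j$, i.e.\ $\langle S_{\mathcal W}^{-1}x, y\rangle=0$ for all $x\in W_i$, $y\in W_j$. I would rewrite this inner product as $\langle (U^*)^{-1}S_{\mathcal W}^{-1}x,\, U^*y\rangle$ using $\langle (U^*)^{-1}a, U^*b\rangle=\langle a,b\rangle$. Wait — that identity is false in general; what is true is $\langle (U^*)^{-1}a, U b\rangle = \langle U^{-1}(U^*)^{-1}a, b\rangle$, which is not what we want either. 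The correct move: $\langle S_{\mathcal W}^{-1}x, y\rangle = \langle (U^*)^{-1}S_{\mathcal W}^{-1}x, U y\rangle$ holds because $\langle (U^*)^{-1}a, Ub\rangle = \langle a, U^{-1}Ub\rangle=\langle a,b\rangle$ — this one \emph{is} correct. Hence $S_{\mathcal W}^{-1}W_i\perp W_j$ iff $(U^*)^{-1}S_{\mathcal W}^{-1}W_i \perp U W_j$ (noting that $U$ maps $W_j$ onto $UW_j$), which is exactly $(ii)$.

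\emph{Step 2: $(iii)\Leftrightarrow(iv)$.} Starting from $(iii)$, namely $\omega_i^2 \pi_{W_i}S_{\mathcal W}^{-1}\pi_{W_j}=\delta_{i,j}\pi_{W_j}$, I would use the last assertion of Proposition \ref{Gavruta1} in the form $\pi_{UW_i}=\pi_{UW_i}(U^*)^{-1}\pi_{W_i}U^*$ — but this requires the hypothesis $U^*UW_i\subseteq W_i$, which is \emph{not} assumed here, so I cannot invoke it directly. Instead I would argue more elementarily: $(iv)$ reads $\omega_i^2\pi_{UW_i}(U^*)^{-1}S_{\mathcal W}^{-1}\pi_{W_j}=\delta_{i,j}\pi_{UW_j}(U^*)^{-1}$. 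For $i=j$ one recognizes, after multiplying by suitable factors, the canonical-dual reconstruction restricted to $W_j$; for $i\neq j$ the vanishing is equivalent to $\langle S_{\mathcal W}^{-1}\pi_{W_j}f, \, w\rangle=0$ for all $w\in W_i$, $f\in\mathcal H$, which is $(ii)$/$(iii)$. More cleanly: apply $U^*$ on the left of $(iv)$? That does not simplify $\pi_{UW_i}$. The honest route is to prove $(iv)\Leftrightarrow(ii)$ by testing both sides against vectors and using $\langle (U^*)^{-1}z, Uw\rangle=\langle z,w\rangle$ again, reducing the projection identity to the orthogonality relation of $(ii)$.

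\emph{Main obstacle.} The delicate point is Step 2: relating $\pi_{UW_i}$ to $\pi_{W_i}$ without the invariance condition $U^*UW_i\subseteq W_i$. The resolution is to avoid expressing $\pi_{UW_i}$ in closed form and instead characterize the identity in $(iv)$ weakly — an operator equation $AB=\delta_{i,j}C$ holds iff $\langle ABf, g\rangle = \delta_{i,j}\langle Cf,g\rangle$ for all $f,g$, and since $\mathrm{range}\,\pi_{UW_i}=UW_i$ it suffices to take $g\in UW_i$, i.e.\ $g=Uw$ with $w\in W_i$. Then $\langle \pi_{UW_i}(U^*)^{-1}S_{\mathcal W}^{-1}\pi_{W_j}f,\,Uw\rangle=\langle (U^*)^{-1}S_{\mathcal W}^{-1}\pi_{W_j}f,\,Uw\rangle=\langle S_{\mathcal W}^{-1}\pi_{W_j}f,\,w\rangle$, which for $i\ne j$ must vanish for all such $w$ and $f$ — precisely $S_{\mathcal W}^{-1}W_j\perp W_i$ — and for $i=j$ must equal $\langle (U^*)^{-1}\pi_{W_j}f, Uw\rangle=\langle \pi_{W_j}f,w\rangle$, i.e.\ $\omega_j^2\pi_{W_j}S_{\mathcal W}^{-1}\pi_{W_j}=\pi_{W_j}$. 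Thus $(iv)$ unwinds to $(iii)$ by this pairing trick, closing the cycle.
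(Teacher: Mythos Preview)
Your plan is correct and the weak-pairing argument in the ``Main obstacle'' paragraph works cleanly: testing $(iv)$ against $g=Uw$ with $w\in W_i$ reduces both the off-diagonal and diagonal cases to the corresponding cases of $(iii)$, and both sides of $(iv)$ vanish on $(UW_i)^{\perp}$ since each has range contained in $UW_i$. One small correction to your reasoning, though: you have misread Proposition~\ref{Gavruta1}. The ``Moreover'' clause $\pi_{UW}=\pi_{UW}(U^{*})^{-1}\pi_{W}U^{*}$ is stated there \emph{unconditionally} --- it does not require $U^{*}UW\subseteq W$. Right-multiplying it by $(U^{*})^{-1}$ gives $\pi_{UW}(U^{*})^{-1}=\pi_{UW}(U^{*})^{-1}\pi_{W}$, which is precisely the closed-form relation between $\pi_{UW_i}$ and $\pi_{W_i}$ you thought was unavailable.

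This is exactly how the paper proceeds: it invokes that identity to insert $\pi_{W_i}$ and then applies $(iii)$ directly to get $(iii)\Rightarrow(iv)$ in one line, and it closes the cycle with $(iv)\Rightarrow(ii)$ via the same inner-product computation you use. So your argument and the paper's agree on $(i)\Leftrightarrow(ii)$ and $(i)\Leftrightarrow(iii)$, and differ only in that the paper gets $(iii)\Rightarrow(iv)$ for free from Proposition~\ref{Gavruta1} while you re-derive the needed projection identity by hand through the pairing $\langle (U^{*})^{-1}a,Ub\rangle=\langle a,b\rangle$. Both routes are valid; yours is slightly more self-contained, the paper's slightly shorter.
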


\begin{proof}
$(i) \Leftrightarrow (iii)$ was proved in \cite[Proposition 2.4]{mitra sh.}. Also, $(i) \Leftrightarrow (ii)$ is obvious.
 
$(iii) \Rightarrow (iv)$ Applying Proposition \ref{Gavruta1} gives 
\begin{align*}
\omega_{i}^{2}\pi_{UW_{i}}\left( U^*\right) ^{-1}S_{\mathcal{W}}^{-1}\pi_{W_{j}}&=\omega_{i}^{2}\pi_{UW_{i}}\left( U^*\right) ^{-1}\pi_{W_{i}}S_{\mathcal{W}}^{-1}\pi_{W_{j}}
\\&=\delta_{i,j}\pi_{UW_{i}}\left( U^*\right) ^{-1}\pi_{W_{j}}
\\&=\delta_{i,j}\pi_{UW_{j}}\left( U^*\right) ^{-1},
\end{align*}
for all $i,j \in I.$

$(iv) \Rightarrow (ii)$ Let $f \in W_{i}$ and $g \in W_{j},$ where $i\neq j$. Then
\begin{align*}
\left\langle \left( U^*\right) ^{-1}S_{\mathcal{W}}^{-1}f,Ug \right\rangle &=\left\langle \left( U^*\right) ^{-1}S_{\mathcal{W}}^{-1}\pi_{W_{i}}f,\pi_{UW_{j}}Ug \right\rangle 
\\&=\left\langle \pi_{UW_{j}}\left( U^*\right) ^{-1}S_{\mathcal{W}}^{-1}\pi_{W_{i}}f,Ug \right\rangle =0.
\end{align*}
\end{proof}

\subsection{Operator-scalable fusion frames}
In what follows, we state a general definition of the operator-scalability for fusion frames. Then we study the results that are comparable to those observed for scalable ordinary frames.

\begin{definition}
A fusion frame $\mathcal{W}=\{(W_{i},\omega_{i})\}_{i \in I} $ of $\mathcal {H}$ is called \textit{operator-scalable} if there exists an invertible operator $U\in{B(\mathcal{H})}$ and a family of weights $ {\gamma}:=\{\gamma_{i}\}_{i \in I} $ such that $ U\mathcal{W}_{\gamma}:=\{(UW_{i},\omega_{i}\gamma_{i})\}_{i \in I} $ is a Parseval fusion frame for $\mathcal {H}$.
\end{definition}

It is worth noting that if $\mathcal{W}=\{(W_{i},\omega_{i})\}_{i \in I} $ is a fusion frame for $\mathcal {H}$ and $U\in{B(\mathcal{H})}$ is an invertible operator, then  $U\mathcal{W}=\{(UW_{i},\omega_{i})\}_{i \in I}$ is also a fusion frame by Theorem 2.4 of \cite{16}. Henceforth, we use $(U,\gamma)$-scalable to denote $\mathcal{W}$ is operator-scalable with the invertible operator $U\in{B(\mathcal{H})}$ and the family of weights $\gamma=\{\gamma_{i}\}_{i \in I},$ and simply $U$-scalable when the weights need not be explicitly mentioned. The general relationship between the operator-scalability and the weight-scalability of $\mathcal{W}$ can be easily determined from the following equivalence: 
\begin{equation}\label{ooo}
\mathcal{W} \textit{\ is \ } (U,\gamma)\textit{-scalable}~ \Leftrightarrow ~U\mathcal{W} \textit{\ is \ } \gamma\textit{-scalable}.
\end{equation}
In particular, if $U$ satisfies the condition $U^*UW_{i}\subseteq W_{i}$ for $i \in I,$ then a simple computation indicates that $\mathcal{W}$ is $(U,\gamma)$-scalable if and only if $\mathcal{W}$ is $\gamma$-scalable. Indeed, by applying Proposition \ref{Gavruta1}, we have
\begin{equation*}
S_{U\mathcal{W}_{\gamma}}=\sum_{i \in I}\left( \omega_{i}\gamma_{i}\right) ^2\pi_{UW_{i}}=US_{\mathcal{W}_{\gamma}}U^{-1}.
\end{equation*}

In order to obtain a characterization of operator-scalable fusion frames, we define the operator $D_{U_{\gamma}}$ corresponding to an operator $U$ and a family of weights $\gamma=\{\gamma_{i}\}_{i \in I}$ by
\begin{align*}
D_{U_{\gamma}}:&\sum_{i \in I}\bigoplus W_{i} \rightarrow \sum_{i \in I}\bigoplus UW_{i}
\\&\{f_{i}\}_{i\in I}\mapsto \left\lbrace \gamma_{i}^{-1}Uf_{i}\right\rbrace _{i\in I},
\end{align*}
where $f_{i}\in W_{i}$ for $i\in I$. Then, $D_{U_{\gamma}}$ is a bounded invertible operator provided that $\gamma^{-1} \in \ell^{\infty}.$ The following theorem extends Proposition 2.4 of \cite{Gitta 13} to the context of fusion frames.
\begin{theorem}\label{frame operator D}
Let $\mathcal{W}=\{(W_{i},\omega_{i})\}_{i \in I} $ be a fusion frame for $\mathcal {H}$ and $U\in B(\mathcal{H})$ be an invertible operator. Let $\gamma^{-1} \in \ell^{\infty}.$ Then the following are equivalent:
\begin{itemize}
\item[$(i)$] $\mathcal{W}$ is $(U,\gamma)$-scalable, 
\item[$(ii)$] The invertible operator $D_{U_{\gamma}}$ on $\sum_{i \in I}\bigoplus W_{i}$ satisfies
\begin{equation}\label{DU}
T_{\mathcal{W}}D_{U_{\gamma}}^{-1}\left( D_{U_{\gamma}}^{*}\right) ^{-1}T_{\mathcal{W}}^{*}=(U^{*}U)^{-1}.
\end{equation}
\end{itemize}
\end{theorem}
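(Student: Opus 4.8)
The plan is to reduce the Parseval condition for $U\mathcal{W}_{\gamma}$ to the stated operator equation by writing the synthesis operator of $U\mathcal{W}_{\gamma}$ in terms of $T_{\mathcal{W}}$, $U$, and $D_{U_{\gamma}}$. The key algebraic fact is the intertwining identity
\[
T_{U\mathcal{W}_{\gamma}}D_{U_{\gamma}}=U\,T_{\mathcal{W}},
\]
which I would verify by a direct computation: for a finitely supported $\{f_{i}\}_{i\in I}\in\sum_{i\in I}\bigoplus W_{i}$ we have $D_{U_{\gamma}}(\{f_{i}\}_{i\in I})=\{\gamma_{i}^{-1}Uf_{i}\}_{i\in I}$ with $\gamma_{i}^{-1}Uf_{i}\in UW_{i}$, whence
\[
T_{U\mathcal{W}_{\gamma}}D_{U_{\gamma}}(\{f_{i}\}_{i\in I})=\sum_{i\in I}(\omega_{i}\gamma_{i})\,\gamma_{i}^{-1}Uf_{i}=U\sum_{i\in I}\omega_{i}f_{i}=U\,T_{\mathcal{W}}(\{f_{i}\}_{i\in I}).
\]
Since $\gamma^{-1}\in\ell^{\infty}$, the operator $D_{U_{\gamma}}$ is bounded and invertible, and the right-hand side above is bounded, so the formal synthesis map of $U\mathcal{W}_{\gamma}$ extends to a bounded operator $T_{U\mathcal{W}_{\gamma}}=U\,T_{\mathcal{W}}D_{U_{\gamma}}^{-1}$; in particular $U\mathcal{W}_{\gamma}$ is automatically a fusion Bessel sequence, so no separate verification of the Bessel bound is required.

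Taking adjoints gives $T_{U\mathcal{W}_{\gamma}}^{*}=\bigl(D_{U_{\gamma}}^{*}\bigr)^{-1}T_{\mathcal{W}}^{*}U^{*}$, and therefore the fusion frame operator of $U\mathcal{W}_{\gamma}$ is
\[
S_{U\mathcal{W}_{\gamma}}=T_{U\mathcal{W}_{\gamma}}T_{U\mathcal{W}_{\gamma}}^{*}=U\,T_{\mathcal{W}}D_{U_{\gamma}}^{-1}\bigl(D_{U_{\gamma}}^{*}\bigr)^{-1}T_{\mathcal{W}}^{*}\,U^{*}.
\]
By the equivalence \eqref{ooo} and the fact that a fusion frame is Parseval precisely when its fusion frame operator is the identity, $\mathcal{W}$ is $(U,\gamma)$-scalable if and only if $S_{U\mathcal{W}_{\gamma}}=I_{\mathcal{H}}$, i.e. $U\,T_{\mathcal{W}}D_{U_{\gamma}}^{-1}\bigl(D_{U_{\gamma}}^{*}\bigr)^{-1}T_{\mathcal{W}}^{*}\,U^{*}=I_{\mathcal{H}}$. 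Multiplying on the left by $U^{-1}$ and on the right by $(U^{*})^{-1}$ — both bounded, as $U$ is invertible — this is equivalent to
\[
T_{\mathcal{W}}D_{U_{\gamma}}^{-1}\bigl(D_{U_{\gamma}}^{*}\bigr)^{-1}T_{\mathcal{W}}^{*}=U^{-1}(U^{*})^{-1}=(U^{*}U)^{-1},
\]
which is exactly \eqref{DU}. Since each step is an equivalence, this proves $(i)\Leftrightarrow(ii)$.

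The argument is essentially bookkeeping, so I do not anticipate a genuine obstacle; the only point deserving a little care is the well-definedness and boundedness of $T_{U\mathcal{W}_{\gamma}}$ prior to knowing that $U\mathcal{W}_{\gamma}$ is a fusion frame. This is handled by establishing the intertwining identity first on finitely supported sequences and then extending by density, using the hypothesis $\gamma^{-1}\in\ell^{\infty}$ to guarantee that $D_{U_{\gamma}}$ (hence $D_{U_{\gamma}}^{-1}$) is bounded.
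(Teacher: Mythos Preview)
Your argument is correct and arrives at the same key identity as the paper, namely
\[
S_{U\mathcal{W}_{\gamma}}=U\,T_{\mathcal{W}}D_{U_{\gamma}}^{-1}\bigl(D_{U_{\gamma}}^{*}\bigr)^{-1}T_{\mathcal{W}}^{*}\,U^{*},
\]
but the route is genuinely different. The paper does not isolate the intertwining relation $T_{U\mathcal{W}_{\gamma}}D_{U_{\gamma}}=U\,T_{\mathcal{W}}$; instead it first computes $(D_{U_{\gamma}}^{*})^{-1}$ explicitly, obtaining $(D_{U_{\gamma}}^{*})^{-1}\{f_{i}\}=\{\gamma_{i}\pi_{UW_{i}}(U^{*})^{-1}f_{i}\}$, and then evaluates $UT_{\mathcal{W}}D_{U_{\gamma}}^{-1}(D_{U_{\gamma}}^{*})^{-1}T_{\mathcal{W}}^{*}U^{*}f$ term by term, invoking the projection identity $\pi_{UW}=\pi_{UW}(U^{*})^{-1}\pi_{W}U^{*}$ from Proposition~\ref{Gavruta1} to collapse $\pi_{UW_{i}}(U^{*})^{-1}\pi_{W_{i}}U^{*}$ to $\pi_{UW_{i}}$ and recover $\sum_{i}(\omega_{i}\gamma_{i})^{2}\pi_{UW_{i}}=S_{U\mathcal{W}_{\gamma}}$. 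Your factorisation of the synthesis operator bypasses both the explicit adjoint computation and the appeal to Proposition~\ref{Gavruta1}, yielding a shorter and more conceptual proof; the paper's computation, on the other hand, makes the action of $(D_{U_{\gamma}}^{*})^{-1}$ visible, which may be of independent use.
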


\begin{proof}
Suppose that $\{f_{i}\}_{i\in I} \in \sum_{i \in I}\bigoplus W_{i}$  and $\{g_{i}\}_{i\in I}\in \sum_{i \in I}\bigoplus UW_{i}$. Using the definition of $D_{U_{\gamma}}$ we get
\begin{align*}
\left\langle  \left( D_{U_{\gamma}}^{*}\right) ^{-1}\left\lbrace f_{i}\right\rbrace _{i\in I},\left\lbrace g_{i}\right\rbrace _{i\in I}\right\rangle  &= \left\langle \left\lbrace f_{i}\right\rbrace _{i\in I},\left\lbrace \gamma_{i}{U}^{-1}g_{i}\right\rbrace _{i\in I}\right\rangle 
\\&=\sum_{i \in I}\left\langle  \gamma_{i}\left( {U}^{*}\right) ^{-1}f_{i},\pi_{UW_{i}}g_{i}\right\rangle 
\\&= \left\langle \left\lbrace \gamma_{i}\pi_{UW_{i}}\left( {U}^{*}\right) ^{-1}f_{i}\right\rbrace _{i\in I},\left\lbrace g_{i}\right\rbrace _{i\in I}\right\rangle  .
\end{align*}
According to Proposition \ref{Gavruta1}, for every $f \in \mathcal {H}$, the frame operator $S_{U\mathcal{W}_{\gamma}}$ is obtained in terms of the operator $D_{U_{\gamma}}$ as follows;
\begin{align*}
UT_{\mathcal{W}}D_{U_{\gamma}}^{-1}\left( D_{U_{\gamma}}^{*}\right) ^{-1}T_{\mathcal{W}}^{*}U^{*}f&=UT_{\mathcal{W}}D_{U_{\gamma}}^{-1}\left( D_{U_{\gamma}}^{*}\right) ^{-1}\left\lbrace \omega_{i}\pi_{W_{i}}U^{*}f\right\rbrace _{i\in I}
\\&=UT_{\mathcal{W}}\left\lbrace \gamma_{i}^{2}\omega_{i}U^{-1}\pi_{UW_{i}}\left( {U}^{*}\right) ^{-1}\pi_{W_{i}}U^{*}f\right\rbrace _{i\in I}
\\&=UT_{\mathcal{W}}\left\lbrace \gamma_{i}^{2}\omega_{i}U^{-1}\pi_{UW_{i}}f\right\rbrace _{i\in I}
\\&=\sum_{i \in I}\left( \omega_{i}\gamma_{i}\right) ^{2}\pi_{UW_{i}}f=S_{U\mathcal{W}_{\gamma}}f.
\end{align*}
Thus,
\begin{equation*}
T_{\mathcal{W}}D_{U_{\gamma}}^{-1}\left( D_{U_{\gamma}}^{*}\right) ^{-1}T_{\mathcal{W}}^{*}=U^{-1}S_{U\mathcal{W}_{\gamma}}\left( U^{*}\right) ^{-1},
\end{equation*}
which completes the proof.
\end{proof}

The scalability in ordinary frames is invariant under unitary operators with the same coefficients. However, this fact is not applicable to the fusion frame setting. In fact, if $K\in B(\mathcal{H})$ is a unitary operator and $\mathcal{W}$ is a $(T,\gamma)$-scalable fusion frame, then $K\mathcal{W}$ is not necessarily $(T,\gamma)$-scalable, see Example \ref{e.g Riesz}. 

\subsection{Fusion Riesz bases}
In this subsection, we provide a characterization for the operator-scalability of fusion Riesz bases. First, assume that $\mathcal{W}=\{(W_{i},\omega_{i})\}_{i \in I}$ is a fusion Riesz basis for $\mathcal {H}$ and $\{f_{i,j}\}_{j \in J_{i}}$ a Riesz basis for $W_{i},$ for all $i \in I.$ It is well known that $\mathcal {F}=\{\omega_{i}f_{i,j}\}_{i \in I,j \in J_{i}}$ is a Riesz basis for $\mathcal {H}$ and $S_{\mathcal{W}}=S_{\mathcal{F}}.$ Hence, there exists an orthonormal basis $\{e_{i,j}\}_{i \in I,j \in J_{i}}$ for $\mathcal {H}$ such that $T_{\mathcal{F}}e_{i,j}=\omega_{i}f_{i,j},$ where $T_{\mathcal{F}}$ is the synthesis operator of $\mathcal{F}$. Thus, by putting $U_{0}:=T_{\mathcal{F}}^{-1},$ it is easily deduced that $\left\lbrace U_{0}W_{i}\right\rbrace _{i \in I}$ is an orthogonal fusion frame, and so $S_{U_{0}\mathcal{W}}=I_{\mathcal {H}}.$ 
Furthermore, for every $f \in \mathcal {H}$ we get
\begin{equation*}
S_{\mathcal{W}}f=S_{\mathcal{F}}f=T_{\mathcal{F}}T_{\mathcal{F}}^{*}f=\left( U_{0}^{*}U_{0}\right) ^{-1}f.
\end{equation*}
Hence, $S_{\mathcal{W}}^{-1}=U_{0}^*U_{0}.$ This result motivates us to describe all operators $U$ that turn a fusion Riesz basis into a $U$-scalable one.

\begin{theorem}\label{in case Riesz}
Let $\mathcal{W}=\{(W_{i},\omega_{i})\}_{i \in I} $ be a fusion Riesz basis for $\mathcal {H}$. Then the following are equivalent:
\begin{itemize}
\item[$(i)$] $\mathcal{W}$ is $\left( U,\omega^{-1}\right) $-scalable.
\item[$(ii)$] $\mathcal{W}$ is $\left( \left( S_{\mathcal{W}}U^*\right) ^{-1},\omega^{-1}\right) $-scalable.
\item[$(iii)$] $U^*UW_{i}=S_{\mathcal{W}}^{-1}W_{i}$ for all $ i \in I.$   
\item[$(iv)$] $\{(UW_{i},\omega_{i})\}_{i \in I} $ is an orthogonal fusion Riesz basis.
\end{itemize}
\end{theorem}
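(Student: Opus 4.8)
The plan is to reduce the whole statement to the orthonormal case by means of the operator $U_0$ constructed just before the theorem: it satisfies $S_{\mathcal W}^{-1}=U_0^{*}U_0$ and makes $\{U_0W_i\}_{i\in I}$ a fusion orthonormal basis. Setting $V_i:=U_0W_i$ and $R:=UU_0^{-1}$, one has $UW_i=RV_i$, and, since $U^{*}U=U_0^{*}R^{*}RU_0$ while $S_{\mathcal W}^{-1}W_i=U_0^{*}U_0W_i=U_0^{*}V_i$, condition $(iii)$ translates into $R^{*}RV_i=V_i$ for all $i\in I$. A short computation gives $(S_{\mathcal W}U^{*})^{-1}W_i=(R^{*})^{-1}V_i$, so $(ii)$ says that $\{(R^{*})^{-1}V_i\}_{i\in I}$ is a fusion orthonormal basis; and $(i)$, after the cancellation $\omega_i\cdot\omega_i^{-1}=1$ and recalling that a $1$-uniform Parseval fusion frame is precisely a fusion orthonormal basis, says the same about $\{RV_i\}_{i\in I}$. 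Finally $(iv)$, after the same cancellation, merely asks that the subspaces $RV_i$ be pairwise orthogonal; since $R$ is invertible and $\{W_i\}$ is complete, $\{RV_i\}$ is automatically complete, whence pairwise orthogonality already makes it a fusion orthonormal basis, and $(i)\Leftrightarrow(iv)$ drops out.

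With these reformulations the theorem becomes the single assertion that I would isolate as the core lemma: \emph{for a fusion orthonormal basis $\{V_i\}_{i\in I}$ and an invertible $R\in B(\mathcal H)$, the family $\{RV_i\}_{i\in I}$ is a fusion orthonormal basis if and only if $R^{*}RV_i=V_i$ for every $i$.} Granting it, $(i)\Leftrightarrow(iii)$ is the lemma for $R$, and $(ii)\Leftrightarrow(iii)$ is the lemma for the invertible operator $(R^{*})^{-1}$: its associated positive operator is $((R^{*})^{-1})^{*}(R^{*})^{-1}=R^{-1}(R^{*})^{-1}=(R^{*}R)^{-1}$, and $(R^{*}R)^{-1}V_i=V_i$ is clearly equivalent to $R^{*}RV_i=V_i$.

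For the lemma, the ``if'' direction is routine: if $R^{*}RV_i=V_i$, then for $i\neq j$, $u\in V_i$, $v\in V_j$ we have $\langle Ru,Rv\rangle=\langle R^{*}Ru,v\rangle=0$ since $R^{*}Ru\in V_i\perp V_j$, so the $RV_i$ are pairwise orthogonal and, being complete, form a fusion orthonormal basis. The ``only if'' direction is the real point. Orthogonality of $\{RV_i\}$ gives, for $u\in V_i$, that $R^{*}Ru\perp V_j$ for all $j\neq i$; as $\mathcal H=\bigoplus_{j}V_j$ this forces $R^{*}Ru\in V_i$, i.e. $R^{*}RV_i\subseteq V_i$ for every $i$. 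To promote this inclusion to the equality demanded by $(iii)$ I would exploit the symmetry of the hypothesis: applying the inclusion just proved to the invertible operator $R^{-1}$ and to the fusion orthonormal basis $\{RV_i\}$ yields $(R^{-1})^{*}R^{-1}(RV_i)\subseteq RV_i$, i.e. $(RR^{*})^{-1}RV_i\subseteq RV_i$; multiplying on the left by $RR^{*}$ and then by $R^{-1}$ turns this into $V_i\subseteq R^{*}RV_i$, so $R^{*}RV_i=V_i$. (Alternatively, $R^{*}R$ maps every $V_j$ into itself, hence also maps $V_i^{\perp}$ into itself, so it commutes with each $\pi_{V_i}$, and one concludes by passing to $(R^{*}R)^{-1}$.)

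I expect the main obstacle to be exactly this passage from inclusion to equality in the core lemma, and — at a more clerical level — keeping the three translations ($U^{*}U=U_0^{*}R^{*}RU_0$, $(S_{\mathcal W}U^{*})^{-1}W_i=(R^{*})^{-1}V_i$, and the identification of $1$-uniform Parseval fusion frames with fusion orthonormal bases) straight; the remaining ingredients — completeness from invertibility, the elementary inner-product computations, and the equivalence $(i)\Leftrightarrow(iv)$ — are routine.
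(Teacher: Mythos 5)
Your proposal is correct, but it proceeds along a genuinely different route from the paper. The paper proves $(i)\Leftrightarrow(iii)$ by working directly with the fusion frame operator: it expands $S_{U\mathcal{W}_{\omega^{-1}}}(U^*)^{-1}S_{\mathcal{W}}^{-1}\pi_{W_i}$ using the orthogonality relation $UW_i\perp (U^*)^{-1}S_{\mathcal{W}}^{-1}W_j$ from Lemma \ref{r.e} to obtain the inclusion $S_{\mathcal{W}}^{-1}W_i\subseteq U^*UW_i$, and then obtains the reverse inclusion by a separate argument, deriving the intertwining identity $\pi_{UW_i}US_{\mathcal{W}}=U\pi_{W_i}$ and invoking Lemma \ref{extend Gav}; the implication $(ii)\Leftrightarrow(i)$ is then a computation with $(US_{\mathcal{W}})^{-1}(S_{\mathcal{W}}U^*)^{-1}$. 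You instead conjugate by the operator $U_0$ constructed just before the theorem (with $S_{\mathcal{W}}^{-1}=U_0^*U_0$ and $\{U_0W_i\}$ a fusion orthonormal basis), which collapses all four conditions into one clean lemma about an invertible $R$ acting on a fusion orthonormal basis; crucially, you get \emph{both} inclusions from a single argument, the reverse one by applying the forward inclusion to $R^{-1}$ and the image basis $\{RV_i\}$, so Lemmas \ref{extend Gav} and \ref{r.e} are not needed at all. Your translations check out: $U^*U=U_0^*R^*RU_0$ turns $(iii)$ into $R^*RV_i=V_i$, and $(S_{\mathcal{W}}U^*)^{-1}=(R^*)^{-1}(U_0^*)^{-1}U_0^*U_0=(R^*)^{-1}U_0$ turns $(ii)$ into the lemma for $(R^*)^{-1}$, whose positive operator $(R^*R)^{-1}$ fixes $V_i$ exactly when $R^*R$ does. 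What the paper's approach buys is independence from the auxiliary operator $U_0$ (and hence from the choice of local orthonormal bases), plus reusable operator identities that feed into Corollary \ref{Riesz other}; what yours buys is a shorter, more conceptual proof whose only nontrivial step is the inclusion-to-equality promotion, handled by symmetry. The one point worth stating explicitly if you write this up is that a complete family of pairwise orthogonal closed subspaces is automatically a fusion orthonormal basis, and that the fusion Riesz basis property of $\{(UW_i,\omega_i)\}_{i\in I}$ in $(iv)$ is automatic for invertible $U$, so that $(iv)$ really does reduce to pairwise orthogonality of the $RV_i$.
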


\begin{proof}
$(i)\Leftrightarrow (iii)$ Let $\mathcal{W}$ be $\left( U,\omega^{-1}\right) $-scalable. Using the fact that
\begin{equation}\label{tyt}
UW_{i}\perp \left( U^*\right) ^{-1}S_{\mathcal{W}}^{-1}W_{j},\quad (i\neq j),
\end{equation}
we get
\begin{equation}\label{sigma2}
\begin{aligned}
\left( U^*\right) ^{-1}S_{\mathcal{W}}^{-1}\pi_{W_{i}}&=S_{U\mathcal{W}_{\omega^{-1}}}\left( U^*\right) ^{-1}S_{\mathcal{W}}^{-1}\pi_{W_{i}}
\\&=\sum_{j \in I}\pi_{UW_{j}}\left( U^*\right) ^{-1}S_{\mathcal{W}}^{-1}\pi_{W_{i}}
\\&=\pi_{UW_{i}}\left( U^*\right) ^{-1}S_{\mathcal{W}}^{-1}\pi_{W_{i}}, \quad (i \in I).
\end{aligned}
\end{equation}
Hence, $ \left( U^*\right) ^{-1}S_{\mathcal{W}}^{-1}W_{i}\subseteq UW_{i} ,$ which implies that $S_{\mathcal{W}}^{-1}W_{i}\subseteq U^*UW_{i},$ for all $i \in I$.
On the other hand, by applying \eqref{sigma2} and Lemma \ref{r.e}$(iv)$, we get
\begin{equation*}
\left( U^*\right) ^{-1}S_{\mathcal{W}}^{-1}\pi_{W_{i}}=\pi_{UW_{i}}\left( U^*\right) ^{-1}S_{\mathcal{W}}^{-1}\pi_{W_{i}}=\pi_{UW_{i}}\left( U^*\right) ^{-1}.
\end{equation*}
So, for each $f,g \in \mathcal {H},$ we induce
\begin{align*}
\left\langle  \pi_{UW_{i}}US_{\mathcal{W}}f,g \right\rangle  &= \left\langle f , S_{\mathcal{W}}U^{*}\pi_{UW_{i}}g \right\rangle 
\\&= \left\langle f , S_{\mathcal{W}}U^{*}\pi_{UW_{i}}\left( U^*\right) ^{-1}U^*g \right\rangle 
\\&= \left\langle f , \pi_{W_{i}}U^*g \right\rangle  = \left\langle  U\pi_{W_{i}}f,g \right\rangle.
\end{align*}
Thus, $\pi_{UW_{i}}US_{\mathcal{W}}=U\pi_{W_{i}},$ for all $i \in I.$ It follows from Lemma \ref{extend Gav} that $U^*UW_{i}\subseteq S_{\mathcal{W}}^{-1}W_{i}.$ Therefore, $U^*UW_{i}=S_{\mathcal{W}}^{-1}W_{i},$ for all $i \in I$. For the converse implication, since $UW_{i}=\left( U^*\right) ^{-1}S_{\mathcal{W}}^{-1}W_{i},$ then by employing Lemma \ref{r.e}, \eqref{tyt} and taking $ \gamma_{i}=\omega_{i}^{-1},$ we conclude that
\begin{align*}
S_{U\mathcal{W}_{\gamma}}\left( U^*\right) ^{-1}&=\sum_{i \in I}\pi_{UW_{i}}\left( U^*\right) ^{-1}
\\&=\sum_{i \in I}\omega_{i}^{2}\pi_{UW_{i}}\left( U^*\right) ^{-1}S_{\mathcal{W}}^{-1}\pi_{W_{i}}
\\&=\sum_{i \in I}\omega_{i}^{2}\pi_{\left( U^*\right) ^{-1}S_{\mathcal{W}}^{-1}W_{i}}\left( U^*\right) ^{-1}S_{\mathcal{W}}^{-1}\pi_{W_{i}}
\\&=\left( U^*\right) ^{-1}\sum_{i \in I}\omega_{i}^{2}S_{\mathcal{W}}^{-1}\pi_{W_{i}}=\left( U^*\right) ^{-1}.
\end{align*}
Hence, $S_{U\mathcal{W}_{\gamma}}=I_{\mathcal {H}}$ and consequently $\mathcal{W}$ is $\left( U,\omega^{-1}\right) $-scalable.

$(i)\Leftrightarrow (ii)$ Let $\mathcal{W}$ be $\left( U,\omega^{-1}\right) $-scalable. In view of $(iii)$, we get
\begin{align*}
\left( US_{\mathcal{W}}\right) ^{-1}\left( S_{\mathcal{W}}U^*\right) ^{-1}W_{i}&=S_{\mathcal{W}}^{-1}U^{-1}\left( U^*\right) ^{-1}S_{\mathcal{W}}^{-1}W_{i}
\\&=S_{\mathcal{W}}^{-1}U^{-1}UW_{i}
\\&=S_{\mathcal{W}}^{-1}W_{i},
\end{align*}
for each $ i\in I $. Therefore, $\mathcal{W}$ is $\left( S_{\mathcal{W}}U^*\right) ^{-1}$-scalable by using $(iii)\Rightarrow(i)$. The converse is derived by applying a symmetry argument.

$(iii)\Rightarrow (iv)$ Applying $(iii)$ leads to 
\begin{equation*}
\left\langle UW_{i},UW_{j} \right\rangle = \left\langle U^{*}UW_{i},W_{j} \right\rangle = \left\langle S_{\mathcal{W}}^{-1}W_{i},W_{j} \right\rangle =0,
\end{equation*}
for each $i\neq j.$ Thus, $(iv)$ holds.

$(iv)\Rightarrow (i)$ Taking $ \gamma_{i}=\omega_{i}^{-1}$, we obtain
\begin{equation*}
S_{U\mathcal{W}_{\gamma}}=\sum_{i \in I} \pi_{UW_{i}}=\pi_{\oplus_{i \in I} UW_{i}}=I_{\mathcal {H}},
\end{equation*}
which yields $(i)$ is satisfied.
\end{proof}

As an immediate result of the above theorem, all fusion Riesz bases are $(U,\gamma)$-scalable by taking $U=S_{\mathcal{W}}^{-1/2}$ and $\gamma=\omega^{-1},$ see also \cite[Proposition 4.7]{Excess 1}.

\begin{corollary}\label{orth. case}
If $\mathcal{W}$ is an orthogonal fusion Riesz basis, then the conditions (i)-(iv) in Theorem \ref{in case Riesz} are equivalent to the following conditions:
\begin{itemize}
\item[$(v)$] $U^*UW_{i}=W_{i}$ for all $ i \in I.$ 
\item[$(vi)$] $\mathcal{W}$ is $\left( \left( U^*\right) ^{-1},\omega^{-1}\right) $-scalable.
\item[$(vii)$] $\widetilde{\mathcal{W}}$ is $\left( U,\omega^{-1}\right) $-scalable.
\end{itemize}
\end{corollary}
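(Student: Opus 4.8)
The plan is to collapse everything onto Theorem~\ref{in case Riesz} by using two elementary facts that hold because $\mathcal{W}=\{(W_{i},\omega_{i})\}_{i\in I}$ is an \emph{orthogonal} fusion Riesz basis. First, since the $W_{i}$ are pairwise orthogonal and complete, $\mathcal{H}=\bigoplus_{i\in I}W_{i}$, so for $f\in W_{i}$ we have $S_{\mathcal{W}}f=\sum_{j\in I}\omega_{j}^{2}\pi_{W_{j}}f=\omega_{i}^{2}f$; thus $S_{\mathcal{W}}$ restricts to $\omega_{i}^{2}I_{W_{i}}$ on each $W_{i}$, hence $S_{\mathcal{W}}^{-1}W_{i}=W_{i}$ for every $i\in I$. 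Second, as an immediate consequence, the canonical dual satisfies $\widetilde{\mathcal{W}}=\{(S_{\mathcal{W}}^{-1}W_{i},\omega_{i})\}_{i\in I}=\{(W_{i},\omega_{i})\}_{i\in I}=\mathcal{W}$. I would record and justify these two observations at the outset.

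Granting them, the equivalence $(iii)\Leftrightarrow(v)$ is immediate: condition $(iii)$ of Theorem~\ref{in case Riesz} reads $U^{*}UW_{i}=S_{\mathcal{W}}^{-1}W_{i}$, and substituting $S_{\mathcal{W}}^{-1}W_{i}=W_{i}$ turns this into $(v)$. The equivalence $(i)\Leftrightarrow(vii)$ is likewise immediate, because $\widetilde{\mathcal{W}}=\mathcal{W}$ makes the assertion ``$\widetilde{\mathcal{W}}$ is $(U,\omega^{-1})$-scalable'' literally identical to the assertion ``$\mathcal{W}$ is $(U,\omega^{-1})$-scalable'', which is $(i)$.

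For $(vi)\Leftrightarrow(v)$ I would invoke the already-established equivalence $(i)\Leftrightarrow(iii)$ of Theorem~\ref{in case Riesz}, but applied to the invertible operator $(U^{*})^{-1}$ in place of $U$ (legitimate, since $\mathcal{W}$ is still a fusion Riesz basis). That equivalence says $\mathcal{W}$ is $\bigl((U^{*})^{-1},\omega^{-1}\bigr)$-scalable if and only if $\bigl((U^{*})^{-1}\bigr)^{*}(U^{*})^{-1}W_{i}=S_{\mathcal{W}}^{-1}W_{i}=W_{i}$ for all $i$. Since $\bigl((U^{*})^{-1}\bigr)^{*}=U^{-1}$ and $U^{-1}(U^{*})^{-1}=(U^{*}U)^{-1}$, the left-hand side equals $(U^{*}U)^{-1}W_{i}$, and because $U^{*}U$ is invertible, $(U^{*}U)^{-1}W_{i}=W_{i}$ holds exactly when $U^{*}UW_{i}=W_{i}$; this is $(v)$. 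Combining these three links with the equivalences $(i)\Leftrightarrow(ii)\Leftrightarrow(iii)\Leftrightarrow(iv)$ from Theorem~\ref{in case Riesz} yields the full chain.

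I do not expect a genuine obstacle here; the only care needed is in justifying the two preliminary facts $S_{\mathcal{W}}^{-1}W_{i}=W_{i}$ and $\widetilde{\mathcal{W}}=\mathcal{W}$ cleanly from orthogonality and completeness, and in the bookkeeping with adjoints and inverses (the identity $\bigl((U^{*})^{-1}\bigr)^{*}(U^{*})^{-1}=(U^{*}U)^{-1}$) used in the $(vi)$ step. It is precisely those facts that reduce the structured conditions of Theorem~\ref{in case Riesz} to the single invariance condition $U^{*}UW_{i}=W_{i}$.
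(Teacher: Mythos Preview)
Your proposal is correct and follows essentially the same approach as the paper: both rest on the observation that orthogonality forces $S_{\mathcal{W}}^{-1}W_{i}=W_{i}$ (the paper records this via $S_{\mathcal{W}}^{-1}\pi_{W_{i}}=\omega_{i}^{-2}\pi_{W_{i}}$), from which $(iii)\Leftrightarrow(v)$ and $(i)\Leftrightarrow(vi)$ fall out exactly as you describe. For $(i)\Leftrightarrow(vii)$ the paper checks the inner-product identity $\langle US_{\mathcal{W}}^{-1}f,\,US_{\mathcal{W}}^{-1}g\rangle=(\omega_{i}\omega_{j})^{-2}\langle Uf,Ug\rangle$ and then appeals to condition~$(iv)$, whereas you simply note $\widetilde{\mathcal{W}}=\mathcal{W}$ and conclude that $(vii)$ is literally $(i)$; your route is slightly more direct but encodes the same content.
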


\begin{proof}
Since $\mathcal{W}$ is orthogonal, then 
\begin{equation}\label{vvvv}
S_{\mathcal{W}}^{-1}\pi_{W_{i}}=\omega_{i}^{-2}\pi_{W_{i}}, \quad (i \in I),
\end{equation}
by \cite[Proposition 3.3]{scaling weights}. Thus, $S_{\mathcal{W}}^{-1}W_{i}=W_{i}$ for all $ i \in I,$ which implies that $(iii)\Leftrightarrow (v).$ The equivalence $(i)\Leftrightarrow (vi)$ is clear by $(v).$ Moreover, it yields from \eqref{vvvv} that
\begin{equation*}
\left\langle  US_{\mathcal{W}}^{-1}f,US_{\mathcal{W}}^{-1}g\right\rangle  =\left( \omega_{i}\omega_{j}\right) ^{-2}\left\langle  Uf,Ug \right\rangle  ,
\end{equation*}
for each $f \in W_{i}$ and $g \in W_{j}$ $(i\neq j).$ Therefore, $(i)\Leftrightarrow (vii)$ directly follows from $(iv).$
\end{proof}

In the following, we present two examples which confirm our results.

\begin{example}\label{e.g Riesz}
(1) Consider fusion Riesz basis $\mathcal{W}=\{(W_{i},\omega_{i})\}_{i=1}^{2},$ defined as $W_{1}=\textnormal{span}\{(1,1,0)\}$ and $W_{2}=\{0\}\times \Bbb{R}^{2}$ for $\Bbb{R}^{3}$. Let
\begin{align*}
U=\begin{pmatrix}
   1  & 0 & 0\\
   1  & -1  & 0 \\
   1  & -1  & 1 
\end{pmatrix}.
\end{align*}
Due to $UW_{1}\perp UW_{2}$, Theorem \ref{in case Riesz} verifies that $\mathcal{W}$ is $\left( U,\omega^{-1}\right) $-scalable. More generally, $\mathcal{W}$ is $\left( T,\omega^{-1}\right) $-scalable by every invertible operator $T$ of the form 
\begin{align*}
T=\begin{pmatrix}
   a_{1}  & 0 & 0\\
   a_{2}  & -a_{2}  & a_{3} \\
   a_{4}  & -a_{4}  & a_{5} 
\end{pmatrix},
\end{align*}
where $ a_{i}\in \Bbb{R}$ for all $1 \leq i \leq 5.$ In addition, if we take 
\begin{align*}
K=\frac{1}{\sqrt{2}}\begin{pmatrix}
   1  & 0 & 1\\
   0  & \sqrt{2}   & 0 \\
   -1 & 0  & 1
\end{pmatrix},
\end{align*}
then $K$ is a unitary operator and it is easily seen that $K\mathcal{W}$ is not $\left( T,\omega^{-1}\right) $-scalable.

(2) Consider $ W_{1}=\Bbb{R}^{2}\times \{0\} $ and $ W_{2}=\{0\} \times \{0\}\times \Bbb{R}$. Then $\mathcal{W}=\{(W_{i},\omega_{i})\}_{i=1}^{2} $ is an orthogonal fusion Riesz basis for $\Bbb{R}^{3}.$ Take
\begin{align*}
U=\begin{pmatrix}
 ~~M & \begin{matrix} 0 \\[1ex] 0 \end{matrix} \\
 \begin{matrix} 0 & 0 \end{matrix} & c \\
\end{pmatrix},
\end{align*}
where $M$ is a $2\times 2$ invertible matrix and $c \in \mathbb{R}\smallsetminus \{0\}.$ One can simply observe that $U^*UW_{i}=W_{i}$ for $ i=1,2.$ Thus, $\mathcal{W}$ is $\left( U,\omega^{-1}\right) $-scalable, by Corollary \ref{orth. case}.
\end{example}

Subsequently, we examine several results derived from Theorem \ref{in case Riesz}.

\begin{corollary}\label{Riesz other}
Let $\mathcal{W}=\{(W_{i},\omega_{i})\}_{i \in I} $ be a fusion Riesz basis for $\mathcal {H}$ and $T\in B(\mathcal{H})$ be an invertible operator. Then the following are equivalent:
\begin{itemize}
\item[$(i)$] $\mathcal{W}$ is $\left( U,\omega^{-1}\right) $-scalable.
\item[$(ii)$] $T\mathcal{W}$ is $\left( UT^{-1},\omega^{-1}\right) $-scalable.
\item[$(iii)$] $\widetilde{\mathcal{W}}$ is $\left( US_{\mathcal{W}},\omega^{-1}\right) $-scalable.
\item[$(iv)$] $\widetilde{\mathcal{W}}$ is $\left( \left( U^{*}\right) ^{-1},\omega^{-1}\right) $-scalable.
\item[$(v)$] $U^*UW_{i}=T^{*}S_{T\mathcal{W}}^{-1}TW_{i}$ for all $ i \in I.$   
\end{itemize}
\end{corollary}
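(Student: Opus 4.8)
The plan is to reduce everything to Theorem \ref{in case Riesz} by identifying, in each case, the invertible operator that must satisfy the characterizing condition (iii) of that theorem (or the equivalent orthogonality/scalability statements), and then to translate between the fusion Riesz basis $\mathcal W$, its image $T\mathcal W$, and its canonical dual $\widetilde{\mathcal W}$ using the standard identities $S_{T\mathcal W}=TS_{\mathcal W}T^*$ (valid since $T$ is invertible) and $S_{\widetilde{\mathcal W}}=S_{\mathcal W}^{-1}$, together with the fact that $\widetilde{\mathcal W}$ is itself a fusion Riesz basis with subspaces $S_{\mathcal W}^{-1}W_i$. The backbone of the argument is the single scalar-free criterion: a fusion Riesz basis $\mathcal V$ with subspaces $V_i$ and frame operator $S_{\mathcal V}$ is $(V,\omega^{-1})$-scalable if and only if $V^*VV_i=S_{\mathcal V}^{-1}V_i$ for all $i$.

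First I would establish $(i)\Leftrightarrow (ii)$. Applying Theorem \ref{in case Riesz}(iii) to the fusion Riesz basis $T\mathcal W$ (whose $i$-th subspace is $TW_i$ and whose frame operator is $S_{T\mathcal W}=TS_{\mathcal W}T^*$), the statement that $T\mathcal W$ is $(UT^{-1},\omega^{-1})$-scalable is equivalent to $(UT^{-1})^*(UT^{-1})(TW_i)=S_{T\mathcal W}^{-1}(TW_i)$, i.e. $(T^*)^{-1}U^*UW_i=(T^*)^{-1}S_{\mathcal W}^{-1}W_i$, which after applying the invertible $T^*$ is exactly $U^*UW_i=S_{\mathcal W}^{-1}W_i$, i.e. Theorem \ref{in case Riesz}(iii) for $\mathcal W$. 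Simultaneously this computation proves $(i)\Leftrightarrow (v)$, since condition (v) is precisely $U^*UW_i=T^*S_{T\mathcal W}^{-1}TW_i$ and $T^*S_{T\mathcal W}^{-1}T=T^*(T^*)^{-1}S_{\mathcal W}^{-1}T^{-1}T=S_{\mathcal W}^{-1}$. So (i), (ii), (v) collapse to the same condition on $\mathcal W$.

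Next I would handle $(iii)$ and $(iv)$, which concern the canonical dual $\widetilde{\mathcal W}=\{(S_{\mathcal W}^{-1}W_i,\omega_i)\}_{i\in I}$. Its frame operator is $S_{\widetilde{\mathcal W}}=S_{\mathcal W}^{-1}$, so $S_{\widetilde{\mathcal W}}^{-1}=S_{\mathcal W}$. For (iii): $\widetilde{\mathcal W}$ is $(US_{\mathcal W},\omega^{-1})$-scalable iff $(US_{\mathcal W})^*(US_{\mathcal W})(S_{\mathcal W}^{-1}W_i)=S_{\widetilde{\mathcal W}}^{-1}(S_{\mathcal W}^{-1}W_i)$, i.e. $S_{\mathcal W}U^*US_{\mathcal W}S_{\mathcal W}^{-1}W_i=S_{\mathcal W}S_{\mathcal W}^{-1}W_i$, which simplifies (using that $S_{\mathcal W}$ is invertible and self-adjoint) to $U^*UW_i=S_{\mathcal W}^{-1}W_i$ after cancelling $S_{\mathcal W}$ on the left — again Theorem \ref{in case Riesz}(iii) for $\mathcal W$. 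For (iv): $\widetilde{\mathcal W}$ is $((U^*)^{-1},\omega^{-1})$-scalable iff $((U^*)^{-1})^*((U^*)^{-1})(S_{\mathcal W}^{-1}W_i)=S_{\mathcal W}(S_{\mathcal W}^{-1}W_i)$, i.e. $U^{-1}(U^*)^{-1}S_{\mathcal W}^{-1}W_i=W_i$, i.e. $S_{\mathcal W}^{-1}W_i=(U^*U)W_i$ after applying $U^*U$ — once more the same condition.

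The argument is essentially a bookkeeping exercise in conjugating the fusion frame operator and the characterizing range equality through the invertible operators $T$ and $S_{\mathcal W}$, so I do not anticipate a genuine obstacle; the one place to be careful is to justify cleanly that $S_{T\mathcal W}=TS_{\mathcal W}T^*$ and $S_{\widetilde{\mathcal W}}=S_{\mathcal W}^{-1}$ with the weights left in place — here $S_{T\mathcal W}=TS_{\mathcal W}T^*$ is not automatic for a general invertible $T$ unless one uses Proposition \ref{Gavruta1}, but in fact the subspace-and-weight system $T\mathcal W$ has frame operator $\sum_i\omega_i^2\pi_{TW_i}$, and the needed identity $\sum_i\omega_i^2\pi_{TW_i}=TS_{\mathcal W}T^*$ holds exactly when $T^*T W_i \subseteq W_i$, which is \emph{not} assumed. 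To avoid this pitfall I would instead work directly with the range condition: by Theorem \ref{in case Riesz}(iii) applied to $T\mathcal W$, I only need $S_{T\mathcal W}^{-1}(TW_i)$, and since $T\mathcal W$ is a fusion Riesz basis one has $S_{T\mathcal W}^{-1}(TW_i)=(TW_i)$'s canonical-dual subspace; rather than compute $S_{T\mathcal W}$ explicitly, I would use the intrinsic characterization $S_{\mathcal V}^{-1}V_i$ is the unique subspace biorthogonal to $\{V_j\}_{j\ne i}$ (Proposition \ref{mitra sh.}(ii)) to identify it. Concretely: $V_i:=TW_i$ and the required dual subspace is characterized by $V_i^{\mathrm{dual}}\perp V_j$ for $j\ne i$; one checks $(T^*)^{-1}S_{\mathcal W}^{-1}W_i$ is orthogonal to $TW_j$ for $j\ne i$ (this is exactly equation \eqref{tyt}-type orthogonality, valid since $\mathcal W$ is a fusion Riesz basis), hence $S_{T\mathcal W}^{-1}TW_i=(T^*)^{-1}S_{\mathcal W}^{-1}W_i$, and the equivalence follows. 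The same device resolves the analogous point for $\widetilde{\mathcal W}$. Thus the only real content is organizing these identifications; everything else is routine manipulation of invertible operators.
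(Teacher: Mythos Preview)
Your overall strategy of reducing every item to the range identity $V^*VV_i=S_{\mathcal V}^{-1}V_i$ from Theorem~\ref{in case Riesz}(iii) is the right one, and it matches the paper's approach in spirit. However, two of the ``standard identities'' you rely on at the outset are \emph{false} in the fusion setting: neither $S_{T\mathcal W}=TS_{\mathcal W}T^*$ nor $S_{\widetilde{\mathcal W}}=S_{\mathcal W}^{-1}$ holds for a general fusion Riesz basis (for instance, in Example~\ref{e.g Riesz}(1) with $\omega_1=\omega_2=1$ the $(1,1)$ entries of $S_{\widetilde{\mathcal W}}$ and $S_{\mathcal W}^{-1}$ are $3/2$ and $3$, respectively). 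You recognise the failure of the first and repair it via biorthogonality, asserting that ``the same device'' handles the second. That is correct in outcome, since what you actually need is only the subspace identity $S_{\widetilde{\mathcal W}}^{-1}\widetilde W_i=W_i$, not the operator identity. But your repair invokes ``the intrinsic characterization: $S_{\mathcal V}^{-1}V_i$ is the unique subspace biorthogonal to $\{V_j\}_{j\ne i}$,'' which is not what Proposition~\ref{mitra sh.}(ii) says; you still owe a dimension/completeness argument to pass from orthogonality to equality.

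The paper sidesteps both issues. For $(i)\Leftrightarrow(ii)$ it simply notes that $(UT^{-1})(TW_i)=UW_i$, so the two scalability statements define literally the same Parseval system; this is trivial, and your route through Theorem~\ref{in case Riesz} is unnecessary here. For $(i)\Leftrightarrow(iv)$ the paper establishes $S_{\widetilde{\mathcal W}}^{-1}\widetilde W_i=W_i$ by a direct operator computation: Proposition~\ref{mitra sh.}(iii) yields $S_{\mathcal W}S_{\widetilde{\mathcal W}}\pi_{W_i}=\omega_i^2\pi_{W_i}$, hence $S_{\mathcal W}S_{\widetilde{\mathcal W}}W_i=W_i$, and inversion gives the claim without any uniqueness argument. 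Your biorthogonality route is a legitimate alternative once the uniqueness step is supplied, but the paper's calculation is cleaner and self-contained.
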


\begin{proof}
$(i)\Leftrightarrow (ii)$ is trivial.

$(i)\Leftrightarrow (iii)$ is obtained by putting $T=S_{\mathcal{W}}^{-1}$ in $(ii)$.

$(ii)\Leftrightarrow (v)$ Assume that $T\mathcal{W}$ is $\left( UT^{-1},\omega^{-1}\right) $-scalable. Since $T\mathcal{W}$ is also a fusion Riesz basis, then it is deduced from Theorem \ref{in case Riesz}$(iii)$ that
\begin{align*}
T^{*}S_{T\mathcal{W}}^{-1}TW_{i}&=T^{*}\left( UT^{-1}\right) ^{*}\left( UT^{-1}\right) TW_{i}
\\&=T^{*}\left( T^{*}\right) ^{-1}U^{*}UW_{i}
\\&=U^{*}UW_{i},
\end{align*}
for all $i \in I.$ The converse is demonstrated by means of a symmetry argument.

$(i)\Leftrightarrow (iv)$ Applying Proposition \ref{mitra sh.} we obtain
\begin{align*}
S_{\mathcal{W}}S_{\widetilde{\mathcal{W}}}\pi_{W_{i}}&=\sum_{j \in I}\omega_{j}^{2} \pi_{W_{j}}\sum_{k \in I}\omega_{k}^{2}\pi_{S_{\mathcal{W}}^{-1}W_{k}}\pi_{W_{i}}
\\&=\sum_{j \in I}\omega_{j}^{4}\pi_{W_{j}}\pi_{S_{\mathcal{W}}^{-1}W_{j}}\pi_{W_{i}}
\\&=\omega_{i}^{2}\pi_{W_{i}}.
\end{align*}
for each $i \in I.$ Thus, $S_{\mathcal{W}}S_{\widetilde{\mathcal{W}}}W_{i}=W_{i}$ and hence,
\begin{equation}\label{333}
\begin{aligned}
S_{\widetilde{\mathcal{W}}}^{-1}\widetilde{W}_{i}&=S_{\widetilde{\mathcal{W}}}^{-1}S_{\mathcal{W}}^{-1}W_{i}
\\&=\left( S_{\mathcal{W}}S_{\widetilde{\mathcal{W}}}\right) ^{-1}W_{i} 
\\&=W_{i},
\end{aligned}
\end{equation}
for all $i \in I$. If $\mathcal{W}$ is $\left( U,\omega^{-1}\right) $-scalable, then by applying Theorem \ref{in case Riesz} and \eqref{333} we get 
\begin{align*}
U^{-1}\left( U^{*}\right) ^{-1}\widetilde{W_{i}}&=\left( U^{*}U\right) ^{-1}S_{\mathcal{W}}^{-1}W_{i}
\\&=W_{i} 
\\&=S_{\widetilde{\mathcal{W}}}^{-1}\widetilde{W_{i}},
\end{align*}
for $i \in I$. Therefore, $\widetilde{\mathcal{W}}$ is $\left( \left( U^{*}\right) ^{-1},\omega^{-1}\right)$-scalable, by Theorem \ref{in case Riesz}. Conversely, if $\widetilde{\mathcal{W}}$ has this property, according to \eqref{333} and Theorem \ref{in case Riesz}, we induce
\begin{align*}
U^{*}UW_{i}&=U^{*}US_{\widetilde{\mathcal{W}}}^{-1}\widetilde{W_{i}} 
\\&=U^{*}U U^{-1}\left( U^{*}\right) ^{-1}\widetilde{W_{i}} 
\\&=S_{\mathcal{W}}^{-1}{W_{i}},
\end{align*}
for each $i \in I$, which ensures that $\mathcal{W}$ is $\left( U,\omega^{-1}\right) $-scalable. 
\end{proof}

\begin{example}
Consider the fusion Riesz basis $\mathcal{W}=\{(W_{i},\omega_{i})\}_{i=1}^2 $ presented in Example \ref{e.g Riesz}(1). A straightforward calculation shows that
 \begin{align*}
S_{\mathcal{W}}=\frac{1}{2}\begin{pmatrix}
   \omega_{1}^2  & \omega_{1}^2 & 0\\
   \omega_{1}^2  & \omega_{1}^2+2\omega_{2}^2 & 0 \\
   0  & 0  & 2\omega_{2}^2 
\end{pmatrix},
\end{align*}
and the subspaces
\begin{equation*}
\widetilde{W}_{1}=\textnormal{span} \left\lbrace (1,0,0)\right\rbrace ,~ \widetilde{W}_{2}=\textnormal{span} \left\lbrace (0,0,1) , (-1,1,0)\right\rbrace ,
\end{equation*}
with the weights $\omega= \{\omega_{i}\}_{i \in I} $ is the canonical dual of $\mathcal{W}$. As previously stated, $\mathcal{W}$ is $\left( T,\omega^{-1}\right) $-scalable for all invertible operators $ T $ of the form 
\begin{align*}
T=\begin{pmatrix}
   a_{1}  & 0 & 0\\
   a_{2}  & -a_{2}  & a_{3} \\
   a_{4}  & -a_{4}  & a_{5} 
\end{pmatrix},
\end{align*}
where $ a_{i}\in \Bbb{R}$ for $1 \leq i \leq 5.$ A simple computation gives 
\begin{align*}
\left( T^{*}\right) ^{-1}\widetilde{W}_{1}&=\textnormal{span} \left\lbrace ( a_3 a_4-a_2 a_5 ,0,0) \right\rbrace , \\
\left( T^{*}\right) ^{-1}\widetilde{W}_{2}&=\textnormal{span} \left\lbrace (0,a_4,-a_2) , (0,a_5,-a_3)\right\rbrace .
\end{align*}
Clearly, $ \left( T^{*}\right) ^{-1}\widetilde{W}_{1}\perp \left( T^{*}\right) ^{-1}\widetilde{W}_{2}.$ Therefore, it follows form Theorem \ref{in case Riesz} that $ \widetilde{\mathcal{W}}$ is $\left( \left( T^{*}\right) ^{-1},\omega^{-1}\right) $-scalable, which confirms the validity of Corollary \ref{Riesz other}$(iv)$.
\end{example}

Now, we focus on the scalability of some duals of fusion Riesz bases. It should be noted that, unlike ordinary frames, the duality does not possess the symmetric property in fusion frame setting \cite{Osgooei}. Furthermore, the excess of a fusion frame and its dual may not be the same, see \cite{excess of fusion} for some examples. Given a fusion Riesz basis, we indicate that any of its 1-excess dual fusion frames cannot be operator-scalable. 

\begin{corollary}
Every 1-excess dual fusion frame of a fusion Riesz basis is not operator-scalable.
\end{corollary}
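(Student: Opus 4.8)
The plan is to first pin down a rigid normal form for an arbitrary $1$-excess dual of a fusion Riesz basis, and then to extract a numerical contradiction from the Parseval condition. Throughout write $\widetilde{W}_i:=S_{\mathcal{W}}^{-1}W_i$; by Proposition \ref{mitra sh.} we have $\widetilde{W}_i\perp W_j$ for $i\neq j$, and $\widetilde{\mathcal{W}}=\{(\widetilde{W}_i,\omega_i)\}_{i\in I}$ is again a fusion Riesz basis (combine Proposition \ref{mitra sh.}$(ii)$ with the identity $S_{\widetilde{\mathcal{W}}}^{-1}\widetilde{W}_i=W_i$ recorded in \eqref{333}), so its synthesis operator $T_{\widetilde{\mathcal{W}}}$ is bijective. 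Let $\mathcal{V}=\{(V_i,\upsilon_i)\}_{i\in I}$ be a dual of $\mathcal{W}$ with $e(\mathcal{V})=1$; I assume throughout, as is standard, that all $W_j$ are nonzero. Unwinding $T_{\mathcal{V}}\varphi_{\mathcal{VW}} T_{\mathcal{W}}^{*}=I_{\mathcal{H}}$ gives $\sum_i\upsilon_i\omega_i\,\pi_{V_i}S_{\mathcal{W}}^{-1}\pi_{W_i}=I_{\mathcal{H}}$, and evaluating this at $f\in\widetilde{W}_j$ — where $\pi_{W_i}f=0$ for $i\neq j$ and $S_{\mathcal{W}}^{-1}\pi_{W_j}f=\omega_j^{-2}f$ by Proposition \ref{mitra sh.}$(iii)$ — collapses the sum to $f=(\upsilon_j/\omega_j)\pi_{V_j}f$. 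Hence $\widetilde{W}_j\subseteq V_j$ and $\upsilon_j=\omega_j$ for every $j$. Since $T_{\mathcal{V}}$ restricts on the closed subspace $\sum_i\bigoplus\widetilde{W}_i\subseteq\sum_i\bigoplus V_i$ to the invertible operator $T_{\widetilde{\mathcal{W}}}$, the domain $\sum_i\bigoplus V_i$ is the algebraic direct sum of $\sum_i\bigoplus\widetilde{W}_i$ and $N(T_{\mathcal{V}})$; so $e(\mathcal{V})=\dim N(T_{\mathcal{V}})=1$ forces $\sum_i\dim(V_i\ominus\widetilde{W}_i)=1$. Thus there is a unique index $j_0$ with $V_{j_0}=\widetilde{W}_{j_0}\oplus^{\perp}L$, $\dim L=1$, and $V_i=\widetilde{W}_i$ for all $i\neq j_0$.

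Now suppose, for a contradiction, that $\mathcal{V}$ is $(U,\gamma)$-scalable, so that $\mathcal{P}:=\{(UV_i,\mu_i)\}_{i\in I}$ with $\mu_i:=\omega_i\gamma_i$ is Parseval: $\sum_i\mu_i^{2}\pi_{UV_i}=I_{\mathcal{H}}$ (with $\gamma^{-1}\in\ell^{\infty}$ as usual; $\mu_i\le1$ is automatic from this identity). The key observation is that $U\widetilde{\mathcal{W}}_{\gamma}:=\{(U\widetilde{W}_i,\mu_i)\}_{i\in I}$ is \emph{again} a fusion Riesz basis: $\{\widetilde{W}_i\}$ is a Riesz decomposition of $\mathcal{H}$, hence so is its image $\{U\widetilde{W}_i\}$ under the invertible operator $U$, and $\mu_i,\mu_i^{-1}\in\ell^{\infty}$ (for $\mu_i^{-1}$, because the weights of a fusion Riesz basis are bounded below and $\gamma^{-1}\in\ell^{\infty}$). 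Since $U\widetilde{W}_{j_0}\subseteq UV_{j_0}$, we may write $\pi_{UV_{j_0}}=\pi_{U\widetilde{W}_{j_0}}+\pi_{M_0}$ with $M_0:=UV_{j_0}\ominus U\widetilde{W}_{j_0}$ a line orthogonal to $U\widetilde{W}_{j_0}$. Inserting this into the Parseval identity and isolating the $j_0$-term yields $S_{U\widetilde{\mathcal{W}}_{\gamma}}=\sum_i\mu_i^{2}\pi_{U\widetilde{W}_i}=I_{\mathcal{H}}-\mu_{j_0}^{2}\pi_{M_0}$.

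The contradiction is now short. As a fusion frame operator, $S_{U\widetilde{\mathcal{W}}_{\gamma}}$ is positive and invertible, so $I_{\mathcal{H}}-\mu_{j_0}^{2}\pi_{M_0}$ being so forces $\mu_{j_0}<1$. On the other hand, $U\widetilde{\mathcal{W}}_{\gamma}$ is a fusion Riesz basis, so Proposition \ref{mitra sh.}$(ii)$ gives $S_{U\widetilde{\mathcal{W}}_{\gamma}}^{-1}(U\widetilde{W}_{j_0})\perp U\widetilde{W}_j$ for $j\neq j_0$; but $I_{\mathcal{H}}-\mu_{j_0}^{2}\pi_{M_0}$ fixes $U\widetilde{W}_{j_0}\subseteq M_0^{\perp}$ pointwise, whence $S_{U\widetilde{\mathcal{W}}_{\gamma}}^{-1}(U\widetilde{W}_{j_0})=U\widetilde{W}_{j_0}$ and therefore $U\widetilde{W}_{j_0}\perp U\widetilde{W}_j$ for every $j\neq j_0$. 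Taking any $0\neq f\in U\widetilde{W}_{j_0}$ and computing $S_{U\widetilde{\mathcal{W}}_{\gamma}}f$ from $\sum_i\mu_i^{2}\pi_{U\widetilde{W}_i}$ using this orthogonality gives $\mu_{j_0}^{2}f$, while the rank-one expression gives $f$; hence $\mu_{j_0}=1$, contradicting $\mu_{j_0}<1$. The main obstacle is Paragraph 1 — extracting the normal form ($\widetilde{W}_i\subseteq V_i$, $\upsilon_i=\omega_i$, a single ``fat'' index $j_0$) from the abstract duality equation, and justifying the excess count in the infinite-dimensional case; if Section 3 already records this normal form, that paragraph collapses to a citation and what remains is the short computation of Paragraphs 2--3, which rests only on Proposition \ref{mitra sh.} and the positivity of a fusion frame operator.
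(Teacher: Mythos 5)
Your proof is correct, but it takes a genuinely different route from the paper's. The paper disposes of the statement in a few lines by citation: it quotes \cite[Corollary 2.6]{Arabyani dual} for the inclusion $S_{\mathcal{W}}^{-1}W_{i}\subseteq V_{i}$, observes that the single ``fat'' subspace $V_{j_0}$ therefore has dimension at least two, and then invokes \cite[Theorem 3.6]{scaling weights} together with \eqref{ooo} --- the point being that an invertible $U$ preserves the dimension of the subspace carrying the excess, so $U\mathcal{V}$ can never be weight-scalable. You instead re-derive the normal form from the duality equation $T_{\mathcal{V}}\varphi_{\mathcal{VW}}T_{\mathcal{W}}^{*}=I_{\mathcal{H}}$ (your Paragraph 1 is essentially a proof of the cited corollary, including the extra information $\upsilon_j=\omega_j$ and the excess count via the algebraic splitting $\sum_i\bigoplus V_i=\left(\sum_i\bigoplus\widetilde{W}_i\right)\oplus N(T_{\mathcal{V}})$), and then extract a self-contained contradiction from the Parseval identity: writing $S_{U\widetilde{\mathcal{W}}_{\gamma}}=I_{\mathcal{H}}-\mu_{j_0}^{2}\pi_{M_0}$ and playing invertibility against Proposition \ref{mitra sh.}$(ii)$ forces $\mu_{j_0}<1$ and $\mu_{j_0}=1$ simultaneously. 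What each approach buys: the paper's proof is short but leans entirely on the companion paper, while yours is independent of \cite[Theorem 3.6]{scaling weights} and makes the mechanism of failure visible. Two small remarks. First, your step ``$\mu_{j_0}<1$'' can be had even more cheaply: $\langle S_{U\widetilde{\mathcal{W}}_{\gamma}}m,m\rangle=\sum_i\mu_i^2\Vert\pi_{U\widetilde{W}_i}m\Vert^2>0$ for $0\neq m\in M_0$ by completeness of $\{U\widetilde{W}_i\}_{i\in I}$, whereas $\langle(I_{\mathcal{H}}-\mu_{j_0}^{2}\pi_{M_0})m,m\rangle=(1-\mu_{j_0}^2)\Vert m\Vert^2$; this avoids appealing to invertibility of the frame operator. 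Second, your argument does need $U\widetilde{\mathcal{W}}_{\gamma}$ to be a fusion \emph{Riesz} basis in order to apply Proposition \ref{mitra sh.}$(ii)$, and for an infinite index set that requires $\inf_i\gamma_i>0$; you flag this ($\gamma^{-1}\in\ell^{\infty}$), and it matches the standing hypothesis of Theorem \ref{frame operator D}, but it is an assumption the paper's own (citation-based) proof does not visibly make, so it is worth stating explicitly if your version were to replace the paper's.
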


\begin{proof}
Let $\mathcal{V}=\{V_{i}\}_{i \in I} $ be a 1-excess dual fusion frame of a fusion Riesz basis $\mathcal{W}=\{(W_{i},\omega_{i})\}_{i \in I} .$ Then $S_{\mathcal{W}}^{-1}W_{i}\subseteq V_{i}$ for all $i \in I,$ by \cite[Corollary 2.6]{Arabyani dual}. Since $e(\mathcal{V})=1,$ there exists a unique $j \in I$ such that 
\begin{align*}
V_{i}= \begin{cases}
S_{\mathcal{W}}^{-1}W_{i}, \quad  i \neq j, 
\\ S_{\mathcal{W}}^{-1}W_{j} \cup \textnormal{span}\{x\}, \quad  i = j, 
\end{cases}
\end{align*}
for some $x \in \mathcal {H}.$ This implies that the subspace containing the excess element $x$ has dimension greater than one. As such, $\mathcal{V}$ is not weight-scalable and consequently is not operator-scalable, by \cite[Theorem 3.6]{scaling weights} and \eqref{ooo}.
\end{proof}

\begin{Remark}
Let $\mathcal{W}=\{(W_{i},\omega_{i})\}_{i \in I} $ be a fusion Riesz basis for $\mathcal {H}$ and $\{e_{i,j}\}_{j \in J_{i}}$ be an orthonormal basis for $W_{i}~ (i\in I)$. Then $\mathcal {F}=\{\omega_{i}e_{i,j}\}_{i \in I,j \in J_{i}}$ and $\widetilde{\mathcal {F}}=\left\lbrace \omega_{i}S_{\mathcal{W}}^{-1}e_{i,j}\right\rbrace _{i \in I,j \in J_{i}}$ are Riesz bases for $\mathcal{H}$. Consider 
\begin{equation*}
\mathcal{A}={\left(\begin{matrix}
M_{1} &  0   & \cdots & 0 \\
 0  &  M_{2}  &  \cdots & 0 \\
 \vdots & \vdots &   \ddots &  \vdots \\
 0 & 0  & \cdots & M_{|I|}
\end{matrix}\right)}_{\sum_{i \in I}|J_{i}|\times |I|},
\end{equation*}
where $M_{i}=\left[ c_{i,1}, \ldots ,c_{i,|J_{i}|} \right] ^t$ for $i \in I$ and $c_{i,k}$ is a non zero scalar for any $k \in J_{i}$. If $U$ is an invertible operator on $\mathcal{H}$ such that
\begin{equation*}
\left[U^*U\right]_{\mathcal {F},\widetilde{\mathcal {F}}}=\mathcal{A},
\end{equation*}
then it yields that $U^*U$ defines a bounded operator on $\mathcal{H}$ as follows;
\begin{align*}
U^*U:& W_{i} \rightarrow S_{\mathcal{W}}^{-1}W_{i}
\\&e_{i,j}\mapsto \sum_{k \in J_{i}}c_{i,k}S_{\mathcal{W}}^{-1}e_{i,k}.
\end{align*}
That means that $U^*UW_{i}=S_{\mathcal{W}}^{-1}W_{i}$ for all $i \in I$ and thereby $\mathcal{W}$ is $(U,\gamma)$-scalable, by Theorem \ref{in case Riesz}. Hence, the matrix representation of all operators $U$ that make $\mathcal{W}$ scalable is given by
\begin{equation*}
\left[ U \right]_{\mathcal {F},\widetilde{\mathcal {F}}} =E\mathcal{A}^{\frac{1}{2}},
\end{equation*}
where $E$ is a partial isometry operator on $\mathcal{H}$ \cite{Furuta}. In light of the fact that $S_{\mathcal{W}}=S_{\mathcal{F}},$ we get
\begin{align*}
\Vert Ue_{i,j} \Vert^{2}&=\left\langle U^*Ue_{i,j},e_{i,j} \right\rangle  
\\&=\left\langle \sum_{k \in J_{i}}c_{i,k}S_{\mathcal{W}}^{-1}e_{i,k},e_{i,j}\right\rangle 
\\&=\sum_{k \in J_{i}}c_{i,k}\left\langle S_{\mathcal{F}}^{-1}e_{i,k},e_{i,j}\right\rangle 
\\&=c_{i,j}\left\langle S_{\mathcal{F}}^{-1}e_{i,j},e_{i,j}\right\rangle =c_{i,j},
\end{align*}
for all $i \in I, j \in J_{i}.$
\end{Remark}

\section{Operator-Scalable 1-excess Fusion Frames}

This section is dedicated to the study of the operator-scalability of 1-excess fusion frames. It is known that if $\mathcal{F}$ is an ordinary frame for $\mathcal {H}$ with the frame operator $S_{\mathcal{F}},$ then $S_{\mathcal{F}}^{-1/2}\mathcal{F}$ is a Parseval frame. However, this is not necessarily the case in the context of fusion frames. In other words, if $\mathcal{V}$ is an arbitrary overcomplete fusion frame for $\mathcal {H},$ then $\mathcal{V}$ is not necessarily $S_{\mathcal{V}}^{-1/2}$-scalable, not even by changing the weights \cite{Excess 1}. Motivated by this result, we are looking for operators that ensure the scalability of 1-excess fusion frames. In the sequel, without losing the generality, we may take into account 1-uniform fusion frames. Nevertheless, it is important to note that our results are valid for 1-excess fusion frames with an arbitrary family of weights. 

In \cite{excess of fusion}, it was shown that every fusion frame for $\mathcal {H}$ has the same excess with every its local frame generated by Riesz bases. In this regard, it is noteworthy that several well known results in ordinary frames cannot be generalized to the fusion frame setting. For instance, there exist fusion frames that are not operator-scalable by any invertible operator and any sequence of weights. It suffices to consider the 1-excess fusion frame $\mathcal{V}$ constituted by 
\begin{equation*}
V_{1}=\textnormal{span}\left\lbrace e_{1},e_{2}\right\rbrace ,~V_{2}=\textnormal{span}\left\lbrace e_{1},e_{3}\right\rbrace ,~V_{3}=\textnormal{span}\left\lbrace e_{4}\right\rbrace ,
\end{equation*}
in which $\{e_{i}\}_{i=1}^4$ is the canonical orthonormal basis for $\Bbb{C}^{4}$, see \cite[Example 7.6]{Excess 1}.
However, our primary aim in this section is to characterize operator-scalable 1-excess fusion frames. Let $\mathcal{V}$ be a 1-excess fusion frame for $\mathcal {H}$. If the excess element belongs to a subspace of dimension greater than one, then $\mathcal{V}$ is not weight-scalable and so cannot be operator-scalable, as well, by \cite[Theorem 3.6]{scaling weights} and \eqref{ooo}. Hence, every operator-scalable 1-excess fusion frame in $\mathcal {H}$ is of the form
\begin{equation}\label{every 1 excess}
\mathcal{V}= V_{0} \cup \mathcal{W},
\end{equation}
where $\mathcal{W}=\{W_{i}\}_{i \in I}$ is a fusion Riesz basis, $V_{0}=\textnormal{span}\{x\}$ and $x =\sum_{i \in \sigma} x_{i}$ is a vector of $\mathcal {H}$ such that $x_{i} \in W_{i}$ and $\sigma=\left\lbrace i \in I\mid x_{i}\neq0 \right\rbrace $. Combining Corollary 3.8 of \cite{scaling weights} and \eqref{ooo}, we derive some necessary conditions for the operator-scalability of 1-excess fusion frames as follows.

\begin{corollary}\label{erer}
Let $\mathcal{V}$ be the 1-excess fusion frame given by \eqref{every 1 excess}. If $\mathcal{V}$ is $(U,\gamma)$-scalable, then the following statements hold: 
\begin{itemize}
\item[$(i)$] $\gamma_{0}< 1$ and $Ux$ is an eigenvector of $S_{U\mathcal{W}_\gamma}$ associated with eigenvalue $1-\gamma_{0}^2$.
\item[$(ii)$]  If $ j \in \sigma^{c},$ then $\gamma_{j}=1,~x \perp U^{*}UW_{j}$ and $U^{*}UW_{j}\perp W_{i}$ for $i\neq j.$ 
\item[$(iii)$]  If $ j\in \sigma,$ then $\gamma_{j}\neq 1,~x \not\perp U^{*}UW_{j}$ and $\textnormal{dim}UW_{j}=1$.
\end{itemize}
\end{corollary}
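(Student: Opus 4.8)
The plan is to reduce the statement to the weight–scalable case already treated in \cite{scaling weights}. By the equivalence \eqref{ooo}, $\mathcal{V}$ is $(U,\gamma)$-scalable precisely when $U\mathcal{V}=\{UV_i\}$ is $\gamma$-scalable, and since $\mathcal{V}=V_0\cup\mathcal{W}$ with $\mathcal{W}$ a fusion Riesz basis, the operator image $U\mathcal{V}=\{Ux\}\cup\{UW_i\}_{i\in I}$ is again a 1-excess fusion frame in which $U\mathcal{W}=\{UW_i\}$ is a fusion Riesz basis (Theorem 2.4 of \cite{16} preserves the fusion-Riesz property), with excess element $Ux=\sum_{i\in\sigma}Ux_i$ and $Ux_i\in UW_i$. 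So it suffices to apply the known necessary conditions for weight-scalability of a 1-excess fusion frame — namely Corollaries 3.7 and 3.8 of \cite{scaling weights} — to the fusion frame $U\mathcal{V}$ and then translate each conclusion back through $U$.

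First I would record that if $U\mathcal{V}$ is $\gamma$-scalable then $S_{U\mathcal{V}_\gamma}=\gamma_0^2\pi_{\langle Ux\rangle}+S_{U\mathcal{W}_\gamma}=I_{\mathcal{H}}$. Restricting this identity to the one-dimensional subspace $\operatorname{span}\{Ux\}$ and to its orthogonal complement intersected with the relevant subspaces yields part (i): evaluating at $Ux$ gives $\gamma_0^2\|Ux\|^2 Ux/\|Ux\|^2+S_{U\mathcal{W}_\gamma}(Ux)=Ux$, hence $S_{U\mathcal{W}_\gamma}(Ux)=(1-\gamma_0^2)Ux$, so $Ux$ is an eigenvector of $S_{U\mathcal{W}_\gamma}$ with eigenvalue $1-\gamma_0^2$; positivity and invertibility of the fusion frame operator force $1-\gamma_0^2>0$, i.e. $\gamma_0<1$. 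For (ii) and (iii) I would invoke the structure of $U\mathcal{W}$ as a fusion Riesz basis: by Proposition \ref{mitra sh.} its subspaces are orthogonal after applying $S_{U\mathcal{W}_\gamma}^{-1}$, and the position of the excess vector $Ux$ relative to the indices determines whether the corresponding weight may be left unscaled. Concretely, for $j\in\sigma^c$ the component $Ux_j=0$, so $Ux$ is orthogonal to $S_{U\mathcal{W}_\gamma}^{-1}UW_j$; combined with the Parseval condition this forces $\gamma_j=1$ and $UW_j\perp UW_i$ for $i\neq j$, which upon pulling back through $U$ (using $\langle UW_j,UW_i\rangle=\langle U^*UW_j,W_i\rangle$ and $\langle Ux,UW_j\rangle = \langle x, U^*UW_j\rangle$) gives exactly $x\perp U^*UW_j$ and $U^*UW_j\perp W_i$ for $i\neq j$. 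For $j\in\sigma$ the nonzero component $Ux_j$ obstructs orthogonality, forcing $\gamma_j\neq1$; and the one-dimensionality claim $\dim UW_j=1$ comes from the fact — already used in establishing \eqref{every 1 excess} via \cite[Theorem 3.6]{scaling weights} — that an operator-scalable 1-excess fusion frame cannot have its excess element sitting inside a subspace of dimension $>1$, applied now to $U\mathcal{V}$ whose subspace $UW_j$ contains the excess component $Ux_j$.

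The main obstacle, and the place where care is needed rather than routine calculation, is the bookkeeping in part (ii): one must check that the orthogonality $UW_j\perp UW_i$ for all $i\neq j$ (including $i=0$, i.e. $Ux\perp UW_j$) really is forced, not merely $S_{U\mathcal{W}_\gamma}^{-1}UW_j\perp UW_i$, and that it survives the pullback. This is where one leans on the fact that $j\in\sigma^c$ makes $UW_j$ behave like a genuine orthonormal-basis block of $U\mathcal{W}$ unaffected by the excess perturbation, so that the $j$-th summand of $S_{U\mathcal{W}_\gamma}$ must already be the projection $\pi_{UW_j}$ and $\gamma_j=1$. Once that is in hand, translating $\langle U^*UW_j, W_i\rangle = \langle UW_j, UW_i\rangle = 0$ and $\langle U^*UW_j, x\rangle = \langle UW_j, Ux\rangle = 0$ is immediate. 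Everything else is a direct transcription of Corollaries 3.7–3.8 of \cite{scaling weights} through the dictionary supplied by \eqref{ooo}.
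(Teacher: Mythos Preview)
Your approach is essentially the same as the paper's: the paper states the corollary as a direct consequence of \eqref{ooo} and Corollary~3.8 of \cite{scaling weights}, exactly the reduction you carry out. Your write-up simply unpacks the translation --- converting the orthogonality and weight conditions for the $\gamma$-scalable fusion frame $U\mathcal{V}$ back to statements about $x$, $W_i$, and $U^*U$ via $\langle Uf,Ug\rangle=\langle U^*Uf,g\rangle$ --- which the paper leaves implicit.
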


We now provide some necessary and sufficient conditions for a general 1-excess fusion frame to be operator-scalable. 

\begin{theorem}
Let $\mathcal{V}$ be the 1-excess fusion frame given by \eqref{every 1 excess}. Then the following statements hold:
\begin{itemize}
\item[$(i)$] $\mathcal{V}$ is $(U,\gamma)$-scalable if and only if 
\begin{equation}\label{Suw}
S_{\mathcal{W}}^{-1}\pi_{W_{i}} =\gamma_{0}^{2}U^{*}U\pi_{V_{0}}S_{\mathcal{W}}^{-1}\pi_{W_{i}}+\gamma_{i}^{2}U^{*}\pi_{UW_{i}}\left( U^{*}\right) ^{-1}, \quad (i \in I).
\end{equation}
\item[$(ii)$] If $\mathcal{V}$ is $(U,\gamma)$-scalable, then 
\begin{equation*}
S_{\mathcal{W}}^{-1}W_{i} \subseteq U^{*}U\left( W_{i}+\textnormal{span}\{x\}\right) , \quad (i \in I). 
\end{equation*} 
\end{itemize}
\end{theorem}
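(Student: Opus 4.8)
The plan is to handle $(i)$ by turning the Parseval condition on $U\mathcal{V}_{\gamma}$ into a statement about each index $i$, and then to read off $(ii)$ from \eqref{Suw} by passing to ranges. After the reduction to $1$-uniform fusion frames, $\mathcal{V}=V_{0}\cup\mathcal{W}$ is $(U,\gamma)$-scalable exactly when
\[
S_{U\mathcal{V}_{\gamma}}=\gamma_{0}^{2}\pi_{UV_{0}}+\sum_{i\in I}\gamma_{i}^{2}\pi_{UW_{i}}=I_{\mathcal{H}},
\]
and the two facts about $\mathcal{W}$ that make the argument work are that $\sum_{i\in I}\pi_{W_{i}}=S_{\mathcal{W}}$ and $\sum_{i\in I}S_{\mathcal{W}}^{-1}\pi_{W_{i}}=I_{\mathcal{H}}$ (the reconstruction formula, available since $\mathcal{W}$ is a $1$-uniform fusion Riesz basis), and, with $\omega_{i}=1$, Lemma \ref{r.e}$(iv)$, namely $\pi_{UW_{j}}(U^{*})^{-1}S_{\mathcal{W}}^{-1}\pi_{W_{i}}=\delta_{i,j}\pi_{UW_{i}}(U^{*})^{-1}$.

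For the forward implication of $(i)$ I would start from $S_{U\mathcal{V}_{\gamma}}=I_{\mathcal{H}}$, multiply on the left by $U^{*}$ and on the right by $(U^{*})^{-1}$ to obtain $\gamma_{0}^{2}U^{*}\pi_{UV_{0}}(U^{*})^{-1}+\sum_{j\in I}\gamma_{j}^{2}U^{*}\pi_{UW_{j}}(U^{*})^{-1}=I_{\mathcal{H}}$, and then multiply this on the right by $S_{\mathcal{W}}^{-1}\pi_{W_{i}}$. By Lemma \ref{r.e}$(iv)$ every term of the $\mathcal{W}$-sum with $j\neq i$ dies and the $j=i$ term becomes $\gamma_{i}^{2}U^{*}\pi_{UW_{i}}(U^{*})^{-1}$, leaving
\[
\gamma_{0}^{2}U^{*}\pi_{UV_{0}}(U^{*})^{-1}S_{\mathcal{W}}^{-1}\pi_{W_{i}}+\gamma_{i}^{2}U^{*}\pi_{UW_{i}}(U^{*})^{-1}=S_{\mathcal{W}}^{-1}\pi_{W_{i}};
\]
this is \eqref{Suw} once one notes that $U^{*}\pi_{UV_{0}}(U^{*})^{-1}$ and $U^{*}U\pi_{V_{0}}$ are positive scalar multiples of one another (both are rank one with range $U^{*}U\,\mathrm{span}\{x\}$ and null space $\mathrm{span}\{x\}^{\perp}$), so the scalar is absorbed into $\gamma_{0}$. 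For the converse I would sum \eqref{Suw} over $i\in I$: the left side collapses to $I_{\mathcal{H}}$, and the right side, using $\sum_{i}S_{\mathcal{W}}^{-1}\pi_{W_{i}}=I_{\mathcal{H}}$, becomes $\gamma_{0}^{2}U^{*}U\pi_{V_{0}}+U^{*}\big(\sum_{i}\gamma_{i}^{2}\pi_{UW_{i}}\big)(U^{*})^{-1}$; conjugating back and again rewriting $U\pi_{V_{0}}U^{*}$ as the relevant multiple of $\pi_{UV_{0}}$ gives $S_{U\mathcal{V}_{\gamma}}=I_{\mathcal{H}}$, i.e.\ $(U,\gamma)$-scalability.

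Finally, $(ii)$ should fall out of \eqref{Suw} by taking ranges: its left-hand side has range $S_{\mathcal{W}}^{-1}W_{i}$, while $R\big(U^{*}U\pi_{V_{0}}S_{\mathcal{W}}^{-1}\pi_{W_{i}}\big)\subseteq U^{*}U\,\mathrm{span}\{x\}$ and $R\big(U^{*}\pi_{UW_{i}}(U^{*})^{-1}\big)\subseteq U^{*}UW_{i}$, hence $S_{\mathcal{W}}^{-1}W_{i}\subseteq U^{*}U\,\mathrm{span}\{x\}+U^{*}UW_{i}=U^{*}U\big(W_{i}+\mathrm{span}\{x\}\big)$. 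The conjugations, the collapse of the series, and these range inclusions are all routine; the one place that needs genuine care is the bookkeeping of the one-dimensional $V_{0}$-term. In particular one should record that $\pi_{V_{0}}S_{\mathcal{W}}^{-1}\pi_{W_{i}}$ already has range in $\mathrm{span}\{x\}$ — by Proposition \ref{mitra sh.}$(iii)$, $\pi_{W_{i}}S_{\mathcal{W}}^{-1}x=x_{i}$ for $i\in\sigma$ and $0$ otherwise, so $\sum_{i}\pi_{V_{0}}S_{\mathcal{W}}^{-1}\pi_{W_{i}}=\pi_{V_{0}}$ — and that the three conjugates $U^{*}\pi_{UV_{0}}(U^{*})^{-1}$, $U^{*}U\pi_{V_{0}}$, $U\pi_{V_{0}}U^{*}$ are mutually proportional rank-one operators, so that the weight $\gamma_{0}$ can be normalised coherently at every step; this is also the step at which the necessary conditions of Corollary \ref{erer} (for example $\dim UW_{i}=1$ for $i\in\sigma$) become visible.
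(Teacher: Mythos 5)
Your proof is correct and follows essentially the same route as the paper's: conjugate the Parseval identity $S_{U\mathcal{V}_{\gamma}}=I_{\mathcal{H}}$ by $U^{*}$, compose on the right with $S_{\mathcal{W}}^{-1}\pi_{W_{i}}$ and use Lemma \ref{r.e}$(iv)$ to isolate the $i$-th term, sum over $i\in I$ for the converse, and pass to ranges for $(ii)$. Your treatment of the rank-one $V_{0}$-term (the proportionality of $U^{*}\pi_{UV_{0}}\left( U^{*}\right)^{-1}$ and $U^{*}U\pi_{V_{0}}$, with the constant absorbed into $\gamma_{0}$) is the same normalization the paper effects by assuming $\Vert Ux\Vert=1$.
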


\begin{proof}
$(i)$ Suppose that $\mathcal{V}$ is $(U,\gamma)$-scalable, then there exists an invertible operator $U\in{B(\mathcal{H})}$ and a sequence of weights $\gamma=\{\gamma_{i}\}_{i \in I}$ such that 
\begin{equation}\label{ytyp}
f=S_{U\mathcal{V}_{\gamma}}f=\gamma_{0}^{2}\pi_{UV_{0}}f+S_{U\mathcal{W}_{\gamma}}f, \quad  (f\in \mathcal {H}).
\end{equation}
Without losing the generality, we may assume that $\Vert Ux \Vert=1.$ Replacing $f=\left( U^{*}\right) ^{-1}S_{\mathcal{W}}^{-1}\pi_{W_{i}}$ in \eqref{ytyp} and applying Lemma \ref{r.e}, we infer that 
\begin{align*}
S_{\mathcal{W}}^{-1}\pi_{W_{i}}&=U^{*}\left( U^{*}\right) ^{-1}S_{\mathcal{W}}^{-1}\pi_{W_{i}}
\\&=U^{*}\left(\gamma_{0}^{2}\pi_{UV_{0}}\left( U^{*}\right) ^{-1}S_{\mathcal{W}}^{-1}\pi_{W_{i}}+S_{U\mathcal{W}_{\gamma}}\left( U^{*}\right) ^{-1}S_{\mathcal{W}}^{-1}\pi_{W_{i}} \right) 
\\&=U^{*}\left(\gamma_{0}^{2}U\pi_{V_{0}}U^{*}\left( U^{*}\right) ^{-1}S_{\mathcal{W}}^{-1}\pi_{W_{i}}+\sum_{j=1}^{\infty}\gamma_{j}^{2}\pi_{UW_{j}}\left( U^{*}\right) ^{-1}S_{\mathcal{W}}^{-1}\pi_{W_{i}} \right) 
\\&=\gamma_{0}^{2}U^{*}U\pi_{V_{0}}S_{\mathcal{W}}^{-1}\pi_{W_{i}}+\gamma_{i}^{2}U^{*}\pi_{UW_{i}}\left( U^{*}\right) ^{-1},
\end{align*}
for each $i \in I,$ as required. Conversely, assume that \eqref{Suw} holds. Then we obtain
\begin{align*}
U^{*}S_{U\mathcal{V}_{\gamma}} \left( U^{*}\right) ^{-1}&=U^{*}\left(\gamma_{0}^{2}\pi_{UV_{0}}+S_{U\mathcal{W}_{\gamma}}\right) \left( U^{*}\right) ^{-1}
\\&=\gamma_{0}^{2}U^{*}\pi_{UV_{0}}\left( U^{*}\right) ^{-1}+U^{*}S_{U\mathcal{W}_{\gamma}}\left( U^{*}\right) ^{-1}
\\&=\gamma_{0}^{2}U^{*}U\pi_{V_{0}}+U^{*}S_{U\mathcal{W}_{\gamma}}\left( U^{*}\right) ^{-1}
\\&=\sum_{i \in I}\left(\gamma_{0}^{2}U^{*}U\pi_{V_{0}}S_{\mathcal{W}}^{-1}\pi_{W_{i}}+\gamma_{i}^{2}U^{*}\pi_{UW_{i}}\left( U^{*}\right) ^{-1} \right) 
\\&=\sum_{i \in I}S_{\mathcal{W}}^{-1}\pi_{W_{i}}=I_{\mathcal{H}}.
\end{align*}
Therefore, $S_{U\mathcal{V}_{\gamma}}=I_{\mathcal{H}},$ which implies that $\mathcal{V}$ is $(U,\gamma)$-scalable.

$(ii)$ Applying \eqref{Suw} gives that
\begin{align*}
S_{\mathcal{W}}^{-1}W_{i}=R\left( S_{\mathcal{W}}^{-1}\pi_{W_{i}}\right) &=R\left( \gamma_{0}^{2}U^{*}U\pi_{V_{0}}S_{\mathcal{W}}^{-1}\pi_{W_{i}}+\gamma_{i}^{2}U^{*}\pi_{UW_{i}}\left( U^{*}\right) ^{-1}\right) 
\\&\subseteq R\left(U^{*}U\pi_{V_{0}}S_{\mathcal{W}}^{-1}\pi_{W_{i}} \right) +R\left(U^{*}\pi_{UW_{i}}\left( U^{*}\right) ^{-1} \right)
\\&=\left\lbrace  \left\langle  S_{\mathcal{W}}^{-1}\pi_{W_{i}}f,x \right\rangle  U^{*}Ux : f \in \mathcal{H}  \right\rbrace  +R\left(U^{*}\pi_{UW_{i}}\right)
\\&=\textnormal{span}\{U^{*}Ux\}+U^{*}UW_{i}
\\&=U^{*}U\left( W_{i}+\textnormal{span}\{x\}\right),
\end{align*}
for all $i \in I,$ which completes the proof. 
\end{proof}

The next theorem identifies all operator-scalable 1-excess fusion frames.

\begin{theorem}\label{two equivalent}
Let $\mathcal{V}$ be the 1-excess fusion frame defined as in \eqref{every 1 excess} and $U \in B(\mathcal {H})$ be invertible. Then the following are equivalent:
\begin{itemize}
\item[$(i)$] $\mathcal{V}$ is $(U,\gamma)$-scalable,
\item[$(ii)$] There exists a partition $\{I_{1},I_{2}\}$ of $I$ such that 
\begin{equation*}
\mathcal{V}=\left\lbrace \textnormal{span}\left\lbrace f_{i} \right\rbrace \right\rbrace _{i \in I_{1}}\cup \left\lbrace W_{i} \right\rbrace _{i \in I_{2}},
\end{equation*}
$\textnormal{span}\{Uf_{i}\}_{i \in I_{1}}\oplus \textnormal{span}\{UW_{i}\}_{i \in I_{2}}=\mathcal {H}$ and one of the following holds:
\begin{itemize}
\item[$(a)$] $\{Uf_{i}\}_{i \in I_{1}}$ is a strictly scalable 1-excess frame sequence with coefficients $\{\gamma_{i}\}_{i \in I_{1}}$ and $\{UW_{i}\}_{i \in I_{2}}$ is an orthogonal sequence of subspaces.

\item[$(b)$] If $\mathcal{F}=\{f_{i}\}_{i \in I_{1}}$ and $\mathcal{W}'=\{W_{i}\}_{i \in I_{2}},$ then 
\begin{align*}
\begin{cases}
 U^{*}Uf=S_{\gamma \mathcal{F}}^{-1}f, \quad  f \in \textnormal{span}\{Uf_{i}\}_{i \in I_{1}}, 
\\U^{*}UW_{i}=S_{\mathcal{W}'}^{-1}W_{i}, \quad  i \in I_{2}.
\end{cases}
\end{align*}
\end{itemize}
\end{itemize}
\end{theorem}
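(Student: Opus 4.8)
The plan is to prove the equivalence by splitting $\mathcal{V}$ along the set $\sigma=\{i\in I\mid x_i\neq 0\}$ and its complement, obtaining alternative $(a)$ in the forward direction so that $(b)$ is needed only as a sufficient condition in the converse. First I would assume $(i)$ and apply Corollary~\ref{erer}. Part $(iii)$ there forces $\textnormal{dim}W_i=1$ for every $i\in\sigma$, so we may write $W_i=\textnormal{span}\{f_i\}$ with $f_i=x_i$ for $i\in\sigma$ and $V_0=\textnormal{span}\{f_0\}$ with $f_0=x=\sum_{i\in\sigma}f_i$; set $I_1$ to consist of $\sigma$ together with the index of $V_0$, and $I_2=\sigma^{c}$. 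Part $(ii)$ yields $\gamma_i=1$ for $i\in I_2$ together with $U^{*}UW_i\perp W_j$ for all $j\neq i$ and $U^{*}UW_i\perp x$; rewriting these as inner-product relations and using $Uf_0=Ux=\sum_{i\in\sigma}Uf_i$, they say exactly that $UW_i\perp UW_j$ for $i\neq j$ in $I_2$ and that $V:=\overline{\textnormal{span}}\{Uf_i\}_{i\in I_1}$ is orthogonal to $\overline{\textnormal{span}}\{UW_i\}_{i\in I_2}$. Since $U\mathcal{V}$ is again a fusion frame and hence complete, these two subspaces have orthogonal sum equal to $\mathcal{H}$, which is the common hypothesis of $(ii)$.

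Next I would use the Parseval identity itself. Since $\gamma_i=1$ on $I_2$, we have $I_{\mathcal{H}}=S_{U\mathcal{V}_\gamma}=\sum_{i\in I_1}\gamma_i^{2}\pi_{UW_i}+\sum_{i\in I_2}\pi_{UW_i}$, and the orthogonality of $\{UW_i\}_{i\in I_2}$ identifies the second sum with $\pi_{\oplus_{i\in I_2}UW_i}=\pi_{V^{\perp}}$; subtracting gives $\sum_{i\in I_1}\gamma_i^{2}\pi_{UW_i}=\pi_{V}$, i.e.\ $\{(\textnormal{span}\{Uf_i\},\gamma_i)\}_{i\in I_1}$ is a Parseval fusion frame for $V$. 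The single linear relation $Uf_0=\sum_{i\in\sigma}Uf_i$ is the only dependence among $\{Uf_i\}_{i\in I_1}$, because $\{Uf_i\}_{i\in\sigma}$ is linearly independent, being the image under the invertible operator $U$ of one nonzero generator from each block of a Riesz decomposition. Hence $\{Uf_i\}_{i\in I_1}$ is a strictly scalable 1-excess frame sequence with coefficients $\{\gamma_i\}_{i\in I_1}$ and $\{UW_i\}_{i\in I_2}$ is an orthogonal sequence of subspaces, which is precisely alternative $(a)$; thus $(i)\Rightarrow(ii)$.

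For the converse I would argue from $(a)$ and from $(b)$ separately. Under $(a)$ the preceding computation runs backwards: strict scalability gives $\sum_{i\in I_1}\gamma_i^{2}\pi_{UW_i}=\pi_{V}$, the orthogonality of $\{UW_i\}_{i\in I_2}$ together with the direct-sum hypothesis gives $\sum_{i\in I_2}\pi_{UW_i}=\pi_{V^{\perp}}$, and summing yields $S_{U\mathcal{V}_\gamma}=I_{\mathcal{H}}$. Under $(b)$ the relation $U^{*}UW_i=S_{\mathcal{W}'}^{-1}W_i$ forces $\langle UW_i,UW_j\rangle=\langle S_{\mathcal{W}'}^{-1}W_i,W_j\rangle=0$ for $i\neq j$ in $I_2$, by Proposition~\ref{mitra sh.}$(ii)$ applied to the fusion Riesz basis $\mathcal{W}'$ of its closed span, so again $\sum_{i\in I_2}\pi_{UW_i}=\pi_{V^{\perp}}$; and the relation $U^{*}Uf=S_{\gamma\mathcal{F}}^{-1}f$ on $V$ is, by the fusion-frame scalability criterion of Theorem~\ref{frame operator D} (whose scalar case is Proposition~2.4 of \cite{Gitta 13}), equivalent to $\{(\textnormal{span}\{Uf_i\},\gamma_i)\}_{i\in I_1}$ being Parseval for $V$, that is $\sum_{i\in I_1}\gamma_i^{2}\pi_{UW_i}=\pi_{V}$. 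Summing again gives $S_{U\mathcal{V}_\gamma}=I_{\mathcal{H}}$, so $\mathcal{V}$ is $(U,\gamma)$-scalable.

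The main obstacle is bookkeeping across the two blocks in infinite dimensions rather than any single hard estimate. One must check that the orthogonal span of $\{Uf_i\}_{i\in I_1}$ and $\{UW_i\}_{i\in I_2}$ genuinely exhausts $\mathcal{H}$ (using completeness of $U\mathcal{V}$ and taking closures), that $\mathcal{W}'=\{W_i\}_{i\in I_2}$ is a fusion Riesz basis for its closed span so that $S_{\mathcal{W}'}$ is invertible there and Proposition~\ref{mitra sh.} applies, and that $\{f_i\}_{i\in I_1}$ is a frame for its span; and, most delicately, one must track the normalization conventions relating the fusion frame operator $\sum_i\gamma_i^{2}\pi_{\textnormal{span}\{Uf_i\}}$ to the ordinary frame operator $S_{\gamma\mathcal{F}}$ when passing between alternatives $(a)$ and $(b)$ for the $I_1$-block. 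The remaining algebraic identities for $S_{U\mathcal{V}_\gamma}$, $\pi_{UW_i}$ and $U^{*}U$ are routine and follow the pattern of the proof of Theorem~\ref{in case Riesz}.
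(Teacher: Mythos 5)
Your proposal is correct and follows essentially the same route as the paper: both reduce via the equivalence \eqref{ooo} to the $\gamma$-scalability of $U\mathcal{V}$ and then invoke the structure of weight-scalable $1$-excess fusion frames, with the converse handled by summing the two blockwise Parseval identities. The only organizational differences are that the paper obtains $(i)\Rightarrow(a)$ by directly citing the structure theorem for weight-scalable $1$-excess fusion frames (Theorem 3.9 of \cite{scaling weights}) where you re-derive it from Corollary \ref{erer}, and that the paper proves $(a)\Leftrightarrow(b)$ whereas you show $(b)\Rightarrow(i)$ directly; both are logically adequate.
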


\begin{proof}
$(i)\Rightarrow(a)$ If $\mathcal{V}$ is $(U,\gamma)$-scalable, then $U\mathcal{V}$ is $\gamma$-scalable, by \eqref{ooo}. Thus, according to \cite[Theorem 3.9]{scaling weights}, $U\mathcal{V}$  has a representation of the form
\begin{equation*}
U\mathcal{V}=\left\lbrace \textnormal{span}\left\lbrace Uf_{i} \right\rbrace \right\rbrace _{i \in I_{1}}\cup \left\lbrace UW_{i} \right\rbrace _{i \in I_{2}},
\end{equation*}
where $\{I_{1},I_{2}\}$ is a partition of $I$, ${\left\lbrace Uf_{i} \right\rbrace _{i \in I_{1}}}$ is a strictly scalable 1-excess frame sequence for $\mathcal{H}_{1}:=\textnormal{span}\{Uf_{i}\}_{i \in I_{1}}$ and $\left\lbrace UW_{i} \right\rbrace _{i \in I_{2}}$ is an orthogonal fusion Riesz basis for $\textnormal{span}\{UW_{i}\}_{i \in I_{2}}=\mathcal{H}_{1}^{\perp}.$

$(a)\Rightarrow(i)$ Due to the assumption, the 1-excess frame sequence $\{Uf_{i}\}_{i \in I_{1}}$ is strictly scalable by $\{\gamma_{i}\}_{i \in I_{1}},$ so
\begin{equation*}
\sum_{i \in I_{1}}\gamma_{i}^{2}\left\langle g,Uf_{i} \right\rangle Uf_{i}=g,
\end{equation*}
for all $g \in \mathcal{H}_{1}.$ Moreover, $\left\lbrace UW_{i} \right\rbrace _{i \in I_{2}}$ is a uniform scalable fusion Riesz basis for $\mathcal{H}_{1}^{\perp}$ i.e., $\sum_{i \in I_{2}}\pi_{UW_{i}}=I_{\mathcal{H}_{1}^{\perp}}.$ Put $\gamma_{i}=1$ for all $i \in I_{2}.$ Thus, by considering $\gamma=\{\gamma_{i}\}_{i \in I_{1}\cup I_{2}}$ and in view of the fact that for every $g \in \mathcal{H},$ there exist unique vectors $g_{1} \in \mathcal{H}_{1}$ and $g_{2} \in \mathcal{H}_{1}^{\perp}$ such that $g=g_{1}+g_{2}$ we obtain 
\begin{align*}
S_{U\mathcal{V}_{\gamma}}g&=\sum_{i \in I_{1}}\gamma_{i}^{2}\pi_{\textnormal{span}\{Uf_{i}\}}(g_{1}+g_{2})+\sum_{i \in I_{2}}\pi_{UW_{i}}(g_{1}+g_{2})
\\&=\sum_{i \in I_{1}}\gamma_{i}^{2}\left\langle g_{1},Uf_{i} \right\rangle Uf_{i}+\sum_{i \in I_{2}}\pi_{UW_{i}}g_{2}
\\&=g_{1}+g_{2}=g.
\end{align*}
Therefore, $\mathcal{V}$ is $(U,\gamma)$-scalable, by \eqref{ooo}.

In the end, it is enough to show that $(a)\Leftrightarrow(b)$. According to Theorem \ref{in case Riesz}, the orthogonality of $U\mathcal{W}'$ is equivalent to $U^{*}UW_{i}=S_{\mathcal{W}'}^{-1}W_{i}$ for all $i \in I_{2}$. In addition, if $U\mathcal{F}$ is strictly scalable with coefficients $\{\gamma_{i}\}_{i \in I_{1}},$ then for each $f \in \mathcal{H}_{1}$ we obtain 
\begin{align*}
f&=\sum_{i \in I_{1}}\gamma_{i}^{2}\left\langle f,Uf_{i} \right\rangle Uf_{i}
\\&=US_{\gamma \mathcal{F}}U^{*}f.
\end{align*}
It gives $U^{*}U=S_{\gamma \mathcal{F}}^{-1}$ on $\mathcal{H}_{1}.$ The converse is obtained by an analogous approach.
\end{proof}

The validity of the obtained results is illustrated through the following examples.

\begin{example}
Consider the fusion Riesz basis introduced in Example \ref{e.g Riesz}(1). As suggested by this example, $\mathcal{W}$ is $\left( T,\omega^{-1}\right) $-scalable by all operators $T$ of the form 
\begin{align*}
T=\begin{pmatrix}
   a_{1}  & 0 & 0\\
   a_{2}  & -a_{2}  & a_{3} \\
   a_{4}  & -a_{4}  & a_{5} 
\end{pmatrix},
\end{align*}
where $ a_{i}\in \Bbb{R}$ for all $1 \leq i \leq 5.$ Now, let $\mathcal{V}=\lbrace V_{1}, W_{2}\rbrace ,$ where $V_{1}=W_{1}+\textnormal{span}\{(0,a,b)\}$ and $ a,b \in \Bbb{R}$ are not simultaneously zero. Then $\mathcal{V}$ is a 1-excess fusion frame for $\mathcal {H}=\Bbb R^{3} $. As the excess element belongs to $V_{1}$ and $\textnormal{dim}UV_{1}=2$ for any invertible operator $U \in B(\mathcal {H})$, it thus follows from Corollary \ref{erer} that $\mathcal{V}$ cannot be operator-scalable.
\end{example}

\begin{example}
Let $\{e_{i}\}_{i=1}^{5}$ be the canonical orthonormal basis for $\mathcal {H}_{5}.$ Consider
\begin{align*}
V_{1}&=\textnormal{span}\left\lbrace e_{1}\right\rbrace ,
\\V_{2}&=V_{3}=\textnormal{span}\left\lbrace a_{1}e_{1}+a_{2}e_{2}\right\rbrace ,
\\V_{4}&=\textnormal{span}\left\lbrace a_{3}e_{3}+a_{4}e_{4},e_{5}\right\rbrace ,
\\V_{5}&=\textnormal{span}\left\lbrace a_{5}e_{3}+a_{6}e_{4}\right\rbrace ,
\end{align*}
where $a_{i}\in \Bbb R$ is non zero for all  $1\leq i \leq 5, a_{3}a_{5}+a_{4}a_{6}\neq 0 $ and $a_{3}a_{6}\neq a_{4}a_{5}$. Then $\mathcal{V}=\{V_{i}\}_{i=1}^5$ is a 1-excess fusion frame for $\mathcal {H}_{5},$ which is not weight-scalable as $V_{4}\not\perp V_{5}$. Take
\begin{align*}
U=\begin{pmatrix}
    a_{2}  & -a_{1} & 0  & 0 & 0 \\
    0  & 1  & 0  &  0  & 0  \\
    0  & 0  & a_{4}  & -a_{3} & 0 \\
    0  & 0  & a_{6} &  -a_{5}  & 0 \\
    0  & 0  & 0  & 0 & 1 \\
\end{pmatrix}, 
\end{align*} 
$\gamma_{1}^2=\gamma_{2}^2+\gamma_{3}^2=\left( \frac{1}{a_{2}}\right)^2 $ and $\gamma_{i}=1$ for $i=4,5$. Then Theorem \ref{two equivalent} assures that $\mathcal{V}$ is $(U,\gamma)$-scalable. More precisely, 
\begin{equation*}
\left\lbrace Ue_{1},U(a_{1}e_{1}+a_{2}e_{2}),U(a_{1}e_{1}+a_{2}e_{2})\right\rbrace =\left\lbrace a_{2}e_{1},a_{2}e_{2},a_{2}e_{2}\right\rbrace 
\end{equation*}
is a strictly scalable 1-excess frame for $\mathcal{H}_{1}:=\textnormal{span}\{e_{i}\}_{i=1}^{2}$ with the coefficients $\{\gamma_{i}\}_{i=1}^{3}$ and 
\begin{equation*}
\left\lbrace UV_{i}\right\rbrace _{i=4}^{5}=\left\lbrace \textnormal{span}\{e_{4},e_{5}\},\textnormal{span}\{e_{3}\} \right\rbrace 
\end{equation*}
is an orthogonal fusion Riesz basis for $\mathcal{H}_{1}^{\perp}$.
\end{example}

Finally, we would like to investigate the operator-scalability of all 1-excess fusion frames in $\Bbb R^2.$

\begin{example}
Let $\{f_{i}\}_{i=1}^3$ be a frame of non zero vectors in $\mathcal{H}=\Bbb{R}^2.$ Consider the 1-excess fusion frame $\mathcal{V}=\{V_{i}\}_{i =1}^{3},$ where $V_{i}=\textnormal{span}\{f_{i}\}$. After rotation and reflection of $f_{i}$ around the origin, and re-indexing if necessary, see \cite[Example 3.12]{Remarks}, we can always assume that 
\begin{equation*}
V_{1}=\textnormal{span}\left\lbrace (1,0) \right\rbrace ,~V_{2}=\textnormal{span}\left\lbrace \left( \cos\theta ,\sin \theta \right) \right\rbrace ,~V_{3}=\textnormal{span}\left\lbrace \left( \cos\psi ,\sin \psi \right) \right\rbrace ,
\end{equation*}
with the condition $0\leq \theta \leq \psi \leq\pi.$ Put $U=\begin{pmatrix}
   a  &  0 \\
   b  &  c  \\
\end{pmatrix} $ and $ \gamma=\{\gamma_{i}\}_{i=1}^3,$ where
\begin{equation}\label{tyyrg}
\begin{cases}
a= \dfrac{1}{\sqrt{1+\cos^2 \theta +\cos^2 \psi}},\\
b=\dfrac{-\left( \cos \theta \sin \theta +\cos \psi \sin \psi \right) }{\sqrt{\left( 1+\cos ^2 \theta +\cos ^2 \psi \right) \left( \sin^{2}\theta +\sin^{2}\psi +\sin^2 (\psi -\theta)\right) }}, \\
c= \dfrac{\sqrt{1+\cos^2 \theta +\cos^2 \psi}}{\sqrt{\sin^{2}\theta +\sin^{2}\psi +\sin^2 (\psi -\theta)}},
\end{cases}
\end{equation}
and
\begin{align*}
\begin{cases}
\gamma_{1}=\sqrt{a^{2}+b^{2}},
\\ \gamma_{2}=\sqrt{\left( a\cos\theta \right) ^{2}+\left( b\cos\theta +c\sin\theta \right) ^{2}},
\\ \gamma_{3}=\sqrt{\left( a\cos\psi \right) ^{2}+\left( b\cos\psi +c\sin\psi \right) ^{2}}.
\end{cases}
\end{align*}
Due to $\mathcal{V}$ is complete, then $(\theta,\psi) \not\in \{(0,0), (0,\pi), (\pi,\pi)\}$ and thereby $\sin^{2}\theta+\sin^{2}\psi+\sin^2 (\psi-\theta) > 0.$ It turns out that $\mathcal{V}$ is $(U,\gamma)$-scalable. Indeed,
\begin{align*}
UV_{1}&=\textnormal{span}\left\lbrace (a,b) \right\rbrace ,
\\UV_{2}&=\textnormal{span}\left\lbrace \left( a\cos\theta , b\cos\theta + c\sin\theta \right) \right\rbrace ,
\\UV_{3}&=\textnormal{span}\left\lbrace \left( a\cos\psi , b\cos\psi + c\sin\psi \right) \right\rbrace .
\end{align*}
A detailed analysis reveals that
\begin{align*}
S_{U\mathcal{V}_{\gamma}}&=\sum_{i=1}^{3}\gamma_{i}^{2}\pi_{UV_{i}}
\\&=\begin{pmatrix}
   a^2\left( 1+\cos^{2}\theta + \cos^{2}\psi \right)  &
\begin{aligned}  &ab+a\cos\theta\,(b\cos\theta +c\sin\theta) 
                        \\&+a\cos\psi\,(b\cos\psi +c\sin\psi) \end{aligned}  \\ 
\\ \begin{aligned} &ab+a\cos\theta\,(b\cos\theta +c\sin\theta)
                       \\&+a\cos\psi\,(b\cos\psi +c\sin\psi) \end{aligned} &
\begin{aligned} &b^2+(b\cos\theta +c\sin\theta)^2 
                       \\&+(b\cos\psi +c\sin\psi)^2  \end{aligned} \\
\end{pmatrix}.
\end{align*}
Substituting $a,~b$ and $c$ from \eqref{tyyrg} into the above and employing certain trigonometric relations, we infer that $S_{U\mathcal{V}_{\gamma}}=I_{\mathcal{H}}.$ Therefore $\mathcal{V}$ is $(U,\gamma)$-scalable. Notice that $\mathcal{V}$ is not necessarily weight-scalable for any choice of $\theta$ and $\psi$. For instance, setting $\theta=\frac{\pi}{6}$ and $\psi=\frac{\pi}{2}$, a straightforward computation indicates that $\mathcal{V}$ formed by 
\begin{equation*}
V_{1}=\textnormal{span}\{(1,0)\},~V_{2}=\textnormal{span}\left\lbrace \left( \frac{\sqrt{3}}{2},\frac{1}{2}\right) \right\rbrace ,~V_{3}=\textnormal{span}\{(0,1)\},
\end{equation*}
is not weight-scalable. However, it is $(U,\gamma)$-scalable by $U=\begin{pmatrix}
   \frac{2}{\sqrt{7}}  &  0 \\
   -\frac{\sqrt{42}}{28}  &  \frac{\sqrt{14}}{4}  \\
\end{pmatrix},$
\\$ \gamma_{1}=\frac{\sqrt{10}}{4}, \gamma_{2}=\frac{\sqrt{2}}{2}$ and $ \gamma_{3}=\frac{\sqrt{14}}{4}$.
\end{example}

\textbf{Author contributions.} All authors listed have made equally a substantial, direct, and intellectual contribution to the work and approved it for publication.

\textbf{Conflict of interest.} This work does not have any conflicts of interest.
 
\textbf{Availability of Data and Material.} Not applicable.
 
\bibliographystyle{amsplain}

\end{document}